\documentclass[reqno]{amsart}
\usepackage{amssymb}
\usepackage{enumerate}
\usepackage[mathscr]{euscript}
\usepackage[hidelinks]{hyperref}
\usepackage{cleveref}
\usepackage{mathtools}
\usepackage{tikz-cd}
\usepackage{changepage}

\numberwithin{equation}{section}

\newtheorem{prop}[equation]{Proposition}
\newtheorem{lem}[equation]{Lemma}

\theoremstyle{definition}
\newtheorem{defn}[equation]{Definition}
\newtheorem{ex}[equation]{Example}
\newtheorem{rec}[equation]{Recollection}
\newtheorem{rem}[equation]{Remark}

\newcommand{\nc}{\newcommand}
\nc{\dmo}{\DeclareMathOperator}

\nc{\bbZ}{\mathbb{Z}}
\nc{\bbQ}{\mathbb{Q}}

\nc{\mfp}{\mathfrak{p}}

\nc{\rmc}{\mathrm{c}}
\nc{\rmb}{\mathrm{b}}
\nc{\rmD}{\mathrm{D}}
\nc{\rmh}{\mathrm{h}}
\nc{\rmop}{\mathrm{op}}
\nc{\rmr}{\mathrm{r}}
\nc{\rmS}{\mathrm{S}}

\nc{\scB}{\mathscr{B}}
\nc{\scC}{\mathscr{C}}
\nc{\scK}{\mathscr{K}}
\nc{\scM}{\mathscr{M}}
\nc{\scS}{\mathscr{S}}
\nc{\scT}{\mathscr{T}}

\dmo{\Ab}{Ab}
\dmo{\Coh}{Coh}
\dmo{\colim}{colim}
\dmo{\Hom}{Hom}
\dmo{\im}{Im}
\dmo{\Ker}{Ker}
\dmo{\Mod}{Mod}
\dmo{\smod}{mod}
\dmo{\smodu}{\underline{mod}}
\dmo{\smodurel}{\underline{mod_{rel}}}
\dmo{\loc}{loc}
\dmo{\Spc}{Spc}
\dmo{\Spec}{Spec}
\dmo{\supp}{supp}
\dmo{\thick}{thick}
\dmo{\Thick}{Thick}

\nc{\hooklongrightarrow}{\lhook\joinrel\longrightarrow}
\nc{\ot}{\otimes}
\nc{\wh}{\widehat}
\nc{\xr}{\xrightarrow}

\begin{document}

\title[Non-distributive lattices of thick tensor-ideals]{Non-distributive lattices of thick tensor-ideals via trivial extensions}
\author{Charalampos Verasdanis}
\address{Institute of Mathematics, Czech Academy of Sciences, \v{Z}itn\'a 25, 115 67 Prague, Czech Republic}
\email{verasdanis@math.cas.cz}
\date{}
\subjclass{18F99, 18G80}
\keywords{Tensor-triangular geometry, trivial extension}
\thanks{This research was supported by the project LQ100192601 Lumina quaeruntur, funded by the Czech Academy of Sciences (RVO 67985840).}

\begin{abstract}
\begin{adjustwidth}{0cm}{-0.1cm}
We construct non-rigid tensor-triangulated categories with non-distributive lattice of thick tensor-ideals.
\end{adjustwidth}
\end{abstract}

\maketitle

\section{Introduction}

Understanding the lattice, or certain sublattices, of thick subcategories of a triangulated category has been an ongoing endeavor since the pioneering works~\cite{BensonCarlsonRickard97,DevinatzHopkinsSmith88,Hopkins87,Neeman92,Thomason97}. This is a difficult problem that has been solved only in specific contexts. Notably, the lattice of radical thick tensor-ideals of a tensor-triangulated category is a spatial frame: isomorphic to the lattice of open subsets of a suitable space~\cite{Balmer05,BuanKrauseSolberg07}.

A necessary condition for a lattice to be a spatial frame is distributivity and it has been realized by Gratz--Stevenson~\cite{GratzStevenson23} that for such algebraic lattices, distributivity is in fact sufficient. Distributivity of the lattice of thick subcategories also often reflects properties of the ambient category; see~\cite{GratzStevenson26,JiangStevenson26} for consequences in this direction.

If a tensor-triangulated category is rigid, then the lattice of all thick tensor-ideals is distributive. On the other hand, counterexamples illustrating the failure of distributivity for non-rigid categories do not seem to be known. We produce such counterexamples. This is achieved via the following construction:

Given a tensor-triangulated category $\scT$ that acts on a triangulated category $\scK$, in the sense of~\cite{Stevenson13}, we define the \emph{trivial extension} $\scT\ltimes \scK$, which resembles the classical notion in ring theory. This is a tensor-triangulated category that is non-rigid as soon as $\scK\neq 0$; see~\Cref{sec:trivial-extensions}. Trivial extensions introduce complications in the lattice of ideals that are not detectable by the spectrum of prime ideals and the homological spectrum does not provide additional information either; see~\Cref{sec:homological-spectrum} for the latter. The claimed examples appear in~\Cref{sec:examples}. In~\Cref{sec:cg}, we explore trivial extensions of compactly generated categories and show that the lattice of smashing tensor-ideals can fail distributivity in the non-rigid setting, as opposed to being a frame in the rigid setting~\cite{BalmerKrauseStevenson20}.

\section{Trivial extensions}\label{sec:trivial-extensions}

We refer the reader to~\cite{Balmer05,Stevenson13,Verasdanis26} for background on tensor-triangulated categories and tensor-actions and to~\cite{GratzStevenson23,Krause26,Stevenson26} for the relevant lattice theory.

Let $\scT=(\scT,\ot,1)$ be an essentially small tensor-triangulated category that acts on an essentially small triangulated category $\scK$ via a functor $-\ast-\colon \scT\times \scK \to \scK$. This is a triangulated functor in both variables that satisfies associativity and unitality: there are natural isomorphisms $(X\ot Y) \ast A \cong X\ast (Y\ast A)$ and $1\ast A\cong A$, for all $X,Y\in \scT$, for all $A\in \scK$. The product category $\scT\times \scK$ becomes a triangulated category in the obvious way. We define a functor $- \ot - \colon (\scT \times \scK) \times (\scT \times \scK) \to \scT\times \scK$ as follows:
\[
(X,A)\ot (Y,B)=(X\ot Y,X\ast B \oplus Y\ast A),\, \forall X,Y\in \scT,\, \forall A,B\in \scK.
\]
If $(f_1,g_1)\colon (X_1,A_1)\to (Y_1,B_1)$ and $(f_2,g_2)\colon (X_2,A_2)\to (Y_2,B_2)$ are two morphisms of $\scT\times \scK$, then $(f_1,g_1)\ot (f_2,g_2)=(f_1\ot f_2,f_1\ast g_2 \oplus f_2\ast g_1)$. It is a matter of tedious computations to verify that this defines a symmetric tensor structure on $\scT\times \scK$ with unit $(1,0)$ that renders $\scT\times \scK$ a tensor-triangulated category.

\begin{defn}\label[defn]{defn:trivial-extension}
Endowed with the tensor structure defined above, the tensor-triangulated category $\scT\times \scK$ is called the \emph{trivial extension} of $\scT$ by $\scK$, which we denote by $\scT\ltimes \scK$.
\end{defn}

Restriction along tensor-triangulated functors yields tensor-actions:

\begin{lem}\label[lem]{lem:actions-tt-functors}
Let $F\colon \scT_1\to \scT_2$ be a tensor-triangulated functor. Then $F$ induces an action of $\scT_1$ on $\scT_2$ by the formula $X\ast Y=F(X)\ot Y,\, \forall X\in \scT_1,\, \forall Y\in \scT_2$.
\end{lem}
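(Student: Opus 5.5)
We will verify directly the defining requirements of a tensor-action, as recalled above: the assignment $(X,Y)\mapsto F(X)\ot Y$ must be triangulated in each variable, and there must be natural associativity and unit isomorphisms $(X\ot X')\ast Y\cong X\ast(X'\ast Y)$ and $1\ast Y\cong Y$. Coherence conditions, if required as in~\cite{Stevenson13}, will be inherited from the coherence of $\scT_2$ and the monoidal structure of $F$.

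First, exactness in each variable. For fixed $Y\in\scT_2$, the functor $X\mapsto F(X)\ot Y$ is the composite of the triangulated functor $F$ with $-\ot Y$, which is triangulated because $\scT_2$ is tensor-triangulated. A composite of triangulated functors is triangulated, with suspension isomorphism $F(\Sigma X)\ot Y\cong \Sigma F(X)\ot Y\cong \Sigma(F(X)\ot Y)$. For fixed $X$, the functor $Y\mapsto F(X)\ot Y$ is $F(X)\ot-$, which is triangulated. If compatibility of the two suspension isomorphisms is required (anticommutativity of the square for $\Sigma X\ast\Sigma Y$), it follows from the corresponding property of $\ot$ on $\scT_2$, because the isomorphism coming from $F$ is merely pre-composed.

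Second, the structural isomorphisms. Since $F$ is a tensor functor, there are natural isomorphisms $F(X\ot X')\cong F(X)\ot F(X')$ and $F(1)\cong 1$. Hence
\[
(X\ot X')\ast Y=F(X\ot X')\ot Y\cong (F(X)\ot F(X'))\ot Y\cong F(X)\ot(F(X')\ot Y)=X\ast(X'\ast Y),
\]
where the last isomorphism is the associator of $\scT_2$. Similarly, $1\ast Y=F(1)\ot Y\cong 1\ot Y\cong Y$ via the unitor. Naturality in all variables follows because each constituent isomorphism is natural.

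The only step that is more than bookkeeping is checking the coherence diagrams (pentagon-type and unit compatibility for the action) together with the compatibility of the structure isomorphisms with suspensions. We expect this to be the main, though routine, obstacle. Our plan is to reduce each such diagram to the monoidal-functor axioms for $F$, namely compatibility of $F(X\ot X')\cong F(X)\ot F(X')$ with the associators and unitors, combined with the pentagon and triangle identities in $\scT_2$. Naturality then lets us paste these into the required diagrams. Compatibility with suspension likewise reduces to the assumption that $F$ is a tensor-triangulated functor, so that its monoidal structure is compatible with the suspension isomorphisms.
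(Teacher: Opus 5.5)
Your proposal is correct and takes the same approach as the paper, which dismisses the lemma as ``easily verifiable.'' Your direct check supplies exactly that verification: exactness in each variable via composites with $F$, and the associativity and unit isomorphisms built from the monoidal structure of $F$ together with the associator and unitor of $\scT_2$.
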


\begin{proof}
Easily verifiable.
\end{proof}

\begin{rem}\label[rem]{rem:canonical-injection-projection}
The canonical injection $\scT\to \scT\ltimes \scK$ and the canonical projection $\scT\ltimes \scK\to \scT$ are tensor-triangulated functors. Thus, there is an equivalence $\scT\cong (\scT\ltimes \scK)/(0\times \scK)$ of tensor-triangulated categories. The injection $\scT\to \scT\ltimes \scK$ defines an action of $\scT$ on $\scT\ltimes \scK$. Specifically, $X\ast (Y,A)\coloneqq(X,0)\ot (Y,A)=(X\ot Y,X\ast A)$. With respect to this action, the canonical injection $\scK\to \scT\ltimes \scK$ and the canonical projection $\scT\ltimes \scK\to \scK$ are action-preserving triangulated functors. Hence, there is an action-preserving equivalence $\scK\cong(\scT\ltimes \scK)/(\scT\times 0)$ of triangulated categories.
\end{rem}

The category $\scT\ltimes \scK$ is non-rigid, except from the trivial case:

\begin{lem}\label[lem]{lem:trivial-extension-rigid}
The category $\scT\ltimes \scK$ is rigid if and only if $\scT$ is rigid and $\scK=0$.
\end{lem}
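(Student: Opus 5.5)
The reverse implication is immediate: if $\scK=0$, then $\scT\ltimes \scK$ is isomorphic to $\scT$ as a tensor-triangulated category, via the canonical injection of \Cref{rem:canonical-injection-projection}, which is then an equivalence. So rigidity of $\scT$ transfers to $\scT\ltimes\scK$. For the forward direction, assume $\scT\ltimes\scK$ is rigid. I will prove separately that $\scT$ is rigid and that $\scK=0$.

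For rigidity of $\scT$, I use the canonical projection $\pi\colon\scT\ltimes\scK\to\scT$. By \Cref{rem:canonical-injection-projection} it is a tensor-triangulated functor, and it is essentially surjective, since $(X,0)\mapsto X$. Strong monoidal functors preserve duals: the evaluation and coevaluation maps exhibiting $(X,0)^\vee$ as a dual of $(X,0)$ are sent to maps exhibiting $\pi((X,0)^\vee)$ as a dual of $X$. Hence every object of $\scT$ is rigid.

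For $\scK=0$, fix $A\in\scK$ and consider $(0,A)$. The key computation is
\[
(0,A)\ot(Y,B)=(0,\,Y\ast A),
\]
using $0\ast B=0$, which holds because $-\ast B$ is triangulated and hence additive. Write $D=(Z,C)$ for the dual of $(0,A)$, with coevaluation $\eta\colon(1,0)\to(0,A)\ot D=(0,Z\ast A)$. Every morphism $(1,0)\to(0,M)$ in the product category is zero, since its components are $1\to 0$ and $0\to M$. So $\eta=0$. One of the triangle identities says that the identity of $(0,A)$ factors as
\[
(0,A)\cong(0,A)\ot(1,0)\xr{\mathrm{id}\ot\eta}(0,A)\ot D\ot(0,A)\xr{\epsilon\ot\mathrm{id}}(1,0)\ot(0,A)\cong(0,A).
\]
Since $\eta=0$, the map $\mathrm{id}\ot\eta$ is zero by additivity of $\ot$, so $\mathrm{id}_{(0,A)}=0$. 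Therefore $(0,A)=0$, that is, $A=0$.

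The main obstacle is being careful about the convention for rigidity. The most common one requires internal homs and that the canonical map $X^\vee\ot Y\to[X,Y]$ be an isomorphism. This is equivalent to every object being strongly dualizable, with evaluation and coevaluation satisfying the triangle identities, and that is the form I use. I would state this equivalence explicitly, or cite \cite{Balmer05}, so that the duality data $\eta,\epsilon$ is available. I would also check that the unit isomorphisms in $\scT\ltimes\scK$ are the expected ones, namely $(1,0)\ot(Y,B)=(Y,\,1\ast B\oplus Y\ast 0)\cong(Y,B)$, so that the triangle identity is correctly formulated. Everything else is routine.
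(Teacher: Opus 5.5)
Your proof is correct, but it takes a different route from the paper.

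\textbf{The paper's route.} The paper computes $(X,A)^{\ot n}=(X^{\ot n},X^{\ot n-1}\ast A^{\oplus n})$. It deduces that the tensor-nilpotent objects form $\mathrm{Nil}(\scT\ltimes\scK)=\mathrm{Nil}(\scT)\times\scK$. It then uses the standard fact that a rigid tensor-triangulated category has no nonzero tensor-nilpotent objects, since each $X$ is a retract of $X\ot X^\vee\ot X$. In particular $(0,A)^{\ot 2}=0$ forces $A=0$. Rigidity of $\scT$ then follows at once from $\scT\cong\scT\ltimes 0$. So your separate argument via the projection $\pi$ is valid but unnecessary.

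\textbf{Your route.} You show directly that $(0,A)$ cannot be dualizable unless it is zero. The coevaluation lands in $(0,A)\ot D\cong(0,Z\ast A)$, and this object receives only the zero map from the unit $(1,0)$. Hence the triangle identity forces $\mathrm{id}_{(0,A)}=0$. Equivalently, $\Hom((0,A),(0,A))\cong\Hom((1,0),D\ot(0,A))=0$.

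\textbf{Comparison.} Your argument is self-contained and does not cite the nilpotence fact, whose proof is itself a duality-retract argument of similar length. It also resembles the Hom computation the paper later uses to show $(X,A)^\vee\cong(X^\vee,0)$: in both, the unit $(1,0)$ does not see the $\scK$-component. The paper's argument buys the extra structural statement $\mathrm{Nil}(\scT\ltimes\scK)=\mathrm{Nil}(\scT)\times\scK$. That statement explains why every radical ideal of $\scT\ltimes\scK$ contains $0\times\scK$.

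\textbf{A cosmetic slip.} With $\eta\colon 1\to X\ot D$, the matching triangle identity is $X\cong 1\ot X\xr{\eta\ot\mathrm{id}}X\ot D\ot X\xr{\mathrm{id}\ot\epsilon}X\ot 1\cong X$. Your display mixes the two orderings, but since you only use that $\eta=0$, the argument is unaffected.
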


\begin{proof}
Let $(X,A)$ be an object of $\scT\ltimes \scK$. Then it is straightforward to verify that $(X,A)^{\ot n}=(X^{\ot n},X^{\ot n-1} \ast A^{\oplus n})$. Hence, $(X,A)$ is tensor-nilpotent if and only if $X$ is tensor-nilpotent. Therefore, the subcategory of tensor-nilpotent objects of $\scT\ltimes \scK$ is $\mathrm{Nil}(\scT\ltimes \scK)=\mathrm{Nil}(\scT)\times \scK$. If $\scT\ltimes \scK$ is rigid, then $\mathrm{Nil}(\scT\ltimes \scK)=0$, which implies that $\scK=0$ and $\scT\cong \scT\ltimes \scK$ is rigid. The converse is evident.
\end{proof}

Next, we describe the thick tensor-ideals of a trivial extension. A thick subcategory $N$ of $\scK$ is a called a \emph{thick submodule} if $\scT\ast N\subseteq N$. We denote by $\Thick^\ast(\scK)$ the set of thick submodules of $\scK$. Ordered by inclusion, $\Thick^\ast(\scK)$ is a complete lattice. Meets are given by intersections. The join of a family $\{N_i\}$ of thick submodules is given by $\thick^\ast(\bigcup N_i)=\bigcap \{N\in \Thick^\ast(\scK)\mid \bigcup N_i\subseteq N\}$. Setting $\scK=\scT$ and $\ast=\ot$ recovers the notions for tensor-ideals. We denote by $\Thick(-)$ the lattice of all thick subcategories.

\begin{lem}\label[lem]{lem:ideals-trivial-extension}
The thick tensor-ideals of $\scT\ltimes \scK$ are of the form $I\times N$, where $I$ is a thick tensor-ideal of $\scT$ and $N$ is a thick submodule of $\scK$ such that $I\ast \scK \subseteq N$.
\end{lem}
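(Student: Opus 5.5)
Two things need to be shown: every thick tensor-ideal $J$ of $\scT\ltimes\scK$ has the stated form, and conversely every such $I\times N$ is a thick tensor-ideal. Throughout I use that in the product triangulated category $\scT\times\scK$ we have $(X,A)\cong(X,0)\oplus(0,A)$. Also, the maps $(X,0)\to(X,A)\to(X,0)$ and $(0,A)\to(X,A)\to(0,A)$ given by inclusions and projections are morphisms of $\scT\times\scK$.

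\emph{Splitting.} Let $J$ be a thick tensor-ideal. Since $J$ is thick, it is closed under direct summands and finite sums. Hence $(X,A)\in J$ if and only if $(X,0)\in J$ and $(0,A)\in J$. Set $I=\{X\in\scT\mid (X,0)\in J\}$ and $N=\{A\in\scK\mid (0,A)\in J\}$; then $J=I\times N$.

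\emph{The pieces are thick.} The inclusions $\scT\to\scT\ltimes\scK$ and $\scK\to\scT\ltimes\scK$ of \Cref{rem:canonical-injection-projection} are triangulated. So $I$ and $N$ are preimages of a thick subcategory under triangulated functors, and are therefore thick.

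\emph{Closure properties.} These come from the tensor formula $(Y,B)\ot(X,A)=(Y\ot X,\,Y\ast A\oplus X\ast B)$, applied in three ways.
\begin{enumerate}[(i)]
\item Taking $(Y,0)\ot(X,0)=(Y\ot X,0)$ shows that $I$ is a tensor-ideal of $\scT$.
\item Taking $(Y,0)\ot(0,A)=(0,Y\ast A)$ shows that $\scT\ast N\subseteq N$, so $N$ is a thick submodule.
\item Taking $(0,B)\ot(X,0)=(0,X\ast B)$ with $X\in I$ shows that $I\ast\scK\subseteq N$.
\end{enumerate}

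\emph{Converse.} Let $I$ and $N$ satisfy the hypotheses. Then $I\times N$ is thick in the product category, because triangles, shifts and summands are computed componentwise. For the ideal property, take $(X,A)\in I\times N$ and $(Y,B)$ arbitrary. Their product is $(Y\ot X,\,Y\ast A\oplus X\ast B)$. The first component lies in $I$ because $I$ is an ideal. In the second component, $Y\ast A\in N$ because $N$ is a submodule, and $X\ast B\in N$ because $I\ast\scK\subseteq N$. Since $N$ is closed under sums, the product lies in $I\times N$.

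The only point needing care is the initial splitting $J=I\times N$. It relies on the decomposition $(X,A)\cong(X,0)\oplus(0,A)$ and on $J$ being closed under summands. This is exactly where thickness, rather than mere triangulatedness, is used. The rest is direct computation with the tensor formula.
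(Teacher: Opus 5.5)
Your proposal is correct and follows essentially the same route as the paper. Both arguments use thickness to split $J$ as a product, read off the ideal, submodule and $I\ast\scK\subseteq N$ conditions from the tensor formula (the paper phrases the first two as inverse images under the canonical injections), and verify the converse by direct computation. Defining $I$ and $N$ via $(X,0)\in J$ and $(0,A)\in J$ just makes explicit the summand argument that the paper compresses into ``thickness of $J$ implies $J=J_\scT\times J_\scK$''.
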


\begin{proof}
If $I\subseteq \scT$ and $N\subseteq \scK$ are thick subcategories, then $I\times N$ is a thick subcategory of $\scT\times \scK$. Let $J$ be a thick subcategory of $\scT\times \scK$ and consider the thick subcategories
\begin{align*}
J_\scT&=\{X\in \scT\mid \exists A\in \scK:(X,A)\in J\}\subseteq \scT,\\
J_\scK&=\{A\in \scK\mid \exists X\in \scT:(X,A)\in J\}\subseteq \scK.
\end{align*}
Thickness of $J$ implies $J=J_\scT\times J_\scK$. Thus, $\Thick(\scT\times \scK)=\Thick(\scT)\times \Thick(\scK)$. Suppose that $I$ is a tensor-ideal and $N$ is a submodule such that $I\ast \scK\subseteq N$. Let $(X,A)\in \scT\ltimes \scK$ and $(Y,B)\in I\times N$. Then $(X,A)\ot (Y,B)=(X\ot Y,X\ast B \oplus Y\ast A)\in I\times N$. Therefore, $I\times N$ is a thick tensor-ideal of $\scT\ltimes \scK$. Conversely, suppose that a thick subcategory $I\times N\subseteq \scT\ltimes \scK$ is a tensor-ideal. The inverse image of $I\times N$ under the injection $\scT\to \scT\ltimes \scK$ (resp.~$\scK\to \scT\ltimes \scK$) is $I$ (resp.~$N$). Hence, $I$ is a tensor-ideal and $N$ is a submodule. Let $X\in I$ and $A\in \scK$. Then $(X,0)\in I\times N$ and since $I\times N$ is a tensor-ideal of $\scT\ltimes \scK$, we have $(0,X\ast A)=(X,0)\ot (0,A)\in I\times N$, which implies that $X\ast A\in N$. So, $I\ast \scK\subseteq N$.
\end{proof}

\begin{prop}\label[prop]{prop:non-distributive}
The lattice $\Thick^\ot(\scT\ltimes \scK)$ is distributive if and only if the lattices $\Thick^\ot(\scT)$ and $\Thick^\ast(\scK)$ are distributive.
\end{prop}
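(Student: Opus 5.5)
The plan is to realize $\Thick^\ot(\scT\ltimes \scK)$ as a sublattice of the product lattice $\Thick^\ot(\scT)\times \Thick^\ast(\scK)$, and conversely to embed each of $\Thick^\ot(\scT)$ and $\Thick^\ast(\scK)$ as a sublattice of $\Thick^\ot(\scT\ltimes \scK)$. Both directions then follow from two standard facts: a product of distributive lattices is distributive, and a sublattice of a distributive lattice is distributive.

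By \Cref{lem:ideals-trivial-extension}, the map $I\times N\mapsto (I,N)$ identifies $\Thick^\ot(\scT\ltimes \scK)$, as a poset, with the subset
\[
L=\{(I,N)\in \Thick^\ot(\scT)\times \Thick^\ast(\scK)\mid I\ast \scK\subseteq N\}.
\]
The key step is to check that $L$ is closed under the componentwise meets and joins of the product. Once this holds, the componentwise operations are the lattice operations of $L$, and hence of $\Thick^\ot(\scT\ltimes \scK)$.

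Meets are immediate: if $I_1\ast\scK\subseteq N_1$ and $I_2\ast\scK\subseteq N_2$, then $(I_1\cap I_2)\ast\scK\subseteq N_1\cap N_2$. Joins are the only point requiring an argument, and this is the step I expect to be the main obstacle. Consider
\[
I'=\{X\in\scT\mid X\ast\scK\subseteq N_1\vee N_2\}.
\]
I would show that $I'$ is a thick tensor-ideal of $\scT$:
\begin{itemize}
\item $I'$ is thick, because $-\ast A$ is triangulated for each $A\in\scK$ and $N_1\vee N_2$ is thick;
\item $I'$ is a tensor-ideal, because $(Y\ot X)\ast A\cong Y\ast(X\ast A)$ and $N_1\vee N_2$ is a submodule.
\end{itemize}
Since $I'$ contains $I_1$ and $I_2$, it contains $I_1\vee I_2$. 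Hence $(I_1\vee I_2)\ast\scK\subseteq N_1\vee N_2$, so $(I_1\vee I_2,N_1\vee N_2)\in L$. This shows $L$ is a sublattice of the product, which proves that distributivity of $\Thick^\ot(\scT)$ and $\Thick^\ast(\scK)$ implies distributivity of $\Thick^\ot(\scT\ltimes \scK)$.

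For the converse, I would use two embeddings into $L$:
\begin{itemize}
\item $\Thick^\ot(\scT)\to L$, $I\mapsto (I,\scK)$, which lands in $L$ trivially;
\item $\Thick^\ast(\scK)\to L$, $N\mapsto (0,N)$, which lands in $L$ since $0\ast\scK=0$.
\end{itemize}
Both maps are injective. Because the operations of $L$ are componentwise, both maps preserve binary meets and joins: for instance, $(I_1,\scK)\vee(I_2,\scK)=(I_1\vee I_2,\scK)$. So each of $\Thick^\ot(\scT)$ and $\Thick^\ast(\scK)$ is isomorphic to a sublattice of $\Thick^\ot(\scT\ltimes\scK)$, and therefore inherits distributivity from it.
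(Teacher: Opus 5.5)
Your proof is correct and uses the same three lattice embeddings as the paper: $I\mapsto I\times\scK$, $N\mapsto 0\times N$, and $I\times N\mapsto (I,N)$ into the product. Your auxiliary ideal $I'=\{X\mid X\ast\scK\subseteq N_1\vee N_2\}$ fills in a detail the paper leaves implicit, namely that joins are componentwise, which follows from $(I_1\vee I_2)\ast\scK\subseteq N_1\vee N_2$.
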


\begin{proof}
The claim is a consequence of the lattice embeddings
\begin{align*}
\Thick^\ot(\scT) &\hooklongrightarrow \Thick^\ot(\scT\ltimes \scK)\\
\mathmakebox[\widthof{$\Thick^\ot(\scT)$}]{I} &\longmapsto \mathmakebox[\widthof{$\Thick^\ot(\scT\ltimes \scK)$}]{I\times \scK}\\[4pt]
\Thick^\ast(\scK) &\hooklongrightarrow \Thick^\ot(\scT\ltimes \scK)\\
\mathmakebox[\widthof{$\Thick^\ast(\scK)$}]{N} &\longmapsto \mathmakebox[\widthof{$\Thick^\ot(\scT\ltimes \scK)$}]{0\times N}\\[4pt]
\Thick^\ot(\scT\ltimes \scK)&\hooklongrightarrow \Thick^\ot(\scT)\times \Thick^\ast (\scK)\\
\mathmakebox[\widthof{$\Thick^\ot(\scT\ltimes \scK)$}]{I\times N} &\longmapsto \mathmakebox[\widthof{$\Thick^\ot(\scT)\times \Thick^\ast(\scK)$}]{I\times N}
\end{align*}
that exist by~\Cref{rem:canonical-injection-projection} and~\Cref{lem:ideals-trivial-extension}.
\end{proof}

Now we show that a trivial extension $\scT\ltimes \scK$ has the same spectrum as $\scT$. We denote by $\Spc(-)$ the spectrum of prime thick tensor-ideals and by $\Thick^{\sqrt{\ot}}(-)$ the lattice of radical thick tensor-ideals.

\begin{lem}\label[lem]{lem:prime-ideals-trivial-extension}
The prime thick tensor-ideals of $\scT\ltimes \scK$ are of the form $\mfp\times \scK$, where $\mfp$ is a prime thick tensor-ideal of $\scT$. There is a homeomorphism $\Spc(\scT\ltimes \scK)\cong \Spc(\scT)$ and a lattice isomorphism $\Thick^{\sqrt{\ot}}(\scT\ltimes \scK)\cong \Thick^{\sqrt{\ot}}(\scT)$. The radical thick tensor-ideals of $\scT\ltimes \scK$ are of the form $I\times \scK$, where $I$ is a radical thick tensor-ideal of $\scT$.
\end{lem}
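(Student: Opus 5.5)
By \Cref{lem:ideals-trivial-extension}, every thick tensor-ideal of $\scT\ltimes \scK$ has the form $I\times N$, where $I\in\Thick^\ot(\scT)$ and $N\in\Thick^\ast(\scK)$ satisfy $I\ast\scK\subseteq N$. So the plan is to test primality and radicality on such pairs, and then transport the description along the bijection $\mfp\mapsto \mfp\times\scK$. Throughout, the key tool is the computation from the proof of \Cref{lem:trivial-extension-rigid}:
\[
(X,A)^{\ot n}=(X^{\ot n},X^{\ot n-1}\ast A^{\oplus n}),
\]
and in particular $(0,A)\ot(0,B)=(0,0)$.

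\textbf{Primes.} Let $P=I\times N$ be a proper prime. For any $A\in\scK$ we have $(0,A)^{\ot 2}=0\in P$, and primes are radical, so $(0,A)\in P$. Hence $N=\scK$. Properness then forces $I\neq\scT$. To see that $I$ is prime, suppose $X\ot Y\in I$. Then $(X,0)\ot(Y,0)=(X\ot Y,0)\in P$, so one of $(X,0)$ or $(Y,0)$ lies in $P$, that is, $X\in I$ or $Y\in I$. Conversely, if $\mfp$ is prime in $\scT$, then $\mfp\times\scK$ is a thick tensor-ideal by \Cref{lem:ideals-trivial-extension}. It is proper, and if $(X,A)\ot(Y,B)\in\mfp\times\scK$, then $X\ot Y\in\mfp$, so $(X,A)\in \mfp\times\scK$ or $(Y,B)\in\mfp\times\scK$. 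This gives a bijection $\Spc(\scT)\to\Spc(\scT\ltimes\scK)$, $\mfp\mapsto\mfp\times\scK$, which is the same as the map induced by the projection $\scT\ltimes\scK\to\scT$ (the preimage of $\mfp$ under the projection is $\mfp\times\scK$).

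\textbf{Homeomorphism.} The induced map $\Spc(\pi)$ is continuous, since $\Spc$ is functorial on tensor-triangulated functors. It remains to check that it is closed, or alternatively to compare supports directly. For $(X,A)$ we have
\[
\supp(X,A)=\{\mfp\times\scK\mid (X,A)\notin\mfp\times\scK\}=\{\mfp\times\scK\mid X\notin\mfp\},
\]
which corresponds exactly to $\supp(X)$. Since the supports form a basis of closed sets on both sides, and every $X$ arises as the first component of some $(X,0)$, the bijection matches the two bases and is therefore a homeomorphism.

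\textbf{Radicals.} Let $I\times N$ be radical. The nilpotence argument above again gives $N=\scK$. Moreover, $X^{\ot n}\in I$ implies $(X,0)^{\ot n}\in I\times\scK$, hence $(X,0)\in I\times\scK$ and so $X\in I$; thus $I$ is radical. Conversely, suppose $I$ is radical and $(X,A)^{\ot n}\in I\times\scK$. Then $X^{\ot n}\in I$, so $X\in I$ and $(X,A)\in I\times\scK$. Therefore $I\mapsto I\times\scK$ is an order-preserving bijection $\Thick^{\sqrt{\ot}}(\scT)\to\Thick^{\sqrt{\ot}}(\scT\ltimes\scK)$ whose inverse is also order-preserving. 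An order isomorphism of lattices preserves all meets and joins, so this is a lattice isomorphism. (Alternatively, the isomorphism follows from Balmer's classification via Thomason subsets of the homeomorphic spectra.)

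The only mildly delicate point is the homeomorphism. It reduces to the support computation above, which is transparent, so I expect no real obstacle in this proof.
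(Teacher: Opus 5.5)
Your proof is correct. For the primes you essentially follow the paper. The square-zero objects $(0,A)$ force $N=\scK$. The primality of $\mfp$, and of $\mfp\times\scK$, is what the paper gets by taking preimages along the injection $\scT\to\scT\ltimes\scK$ and the projection $\scT\ltimes\scK\to\scT$; you unfold these preimages by hand.

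For the homeomorphism there is a small difference. The paper notes that the maps on $\Spc$ induced by the injection and the projection are continuous and mutually inverse. You instead match the closed bases $\supp(X,A)\leftrightarrow\supp(X)$ directly. The two arguments are equivalent.

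The genuine difference is in the radical part. The paper obtains both the lattice isomorphism and the explicit form $I\times\scK$ from Balmer's classification of radical thick tensor-ideals by Thomason subsets, transported along the homeomorphism of spectra. You argue directly:
\begin{itemize}
\item a radical $I\times N$ must have $N=\scK$, again because $(0,A)^{\ot 2}=0$;
\item $I\times\scK$ is radical if and only if $I$ is;
\item hence $I\mapsto I\times\scK$ is an order isomorphism, and so a lattice isomorphism.
\end{itemize}
Your route is elementary and self-contained, needing only \Cref{lem:ideals-trivial-extension}. The paper's route is shorter once the homeomorphism is in hand, and it ties the radical lattice directly to the spectrum.
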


\begin{proof}
Let $\mfp\times N$ be a prime thick tensor-ideal of $\scT\ltimes \scK$. Then $\mfp$ must be a prime ideal of $\scT$, since it is the inverse image of $\mfp\times N$ under the injection $\scT\to\scT\ltimes \scK$. Let $A\in \scK$. Then $(0,A)\ot (0,A)=(0,0)\in \mfp\times N$, hence $(0,A)\in \mfp\times N$, i.e., $A\in N$. This shows that $N=\scK$. If $\mfp$ is a prime ideal of $\scT$, then the ideal $\mfp\times \scK$ must be prime, since it is the inverse image of $\mfp$ under the projection $\scT\ltimes \scK\to \scT$. We have essentially shown that the continuous maps between $\Spc(\scT\ltimes \scK)$ and $\Spc(\scT)$ induced by the functors $\scT\to \scT\ltimes \scK$ and $\scT\ltimes \scK\to \scT$ exhibit a homeomorphism. The claimed lattice isomorphism between the lattices of radical thick tensor-ideals and their explicit form follows from Balmer's classification theorem~\cite{Balmer05}.
\end{proof}

\begin{rem}\label[rem]{rem:unit-generates}
If $\scT=\thick(1)$, then every thick subcategory of $\scT$ is a tensor-ideal and every thick subcategory of $\scK$ is a submodule. In this case, the lattice of thick tensor-ideals of $\scT\ltimes \scK$ consists of all thick subcategories of the form $I\times N$ with $I\ast \scK \subseteq N$. Note that if $\scK\neq 0$, then it follows from~\Cref{lem:ideals-trivial-extension} that there is always a thick subcategory of $\scT\ltimes \scK$, namely $\scT\times 0$, that is not a tensor-ideal.
\end{rem}

\begin{rem}\label[rem]{rem:derived}
Even though trivial extensions, as in~\Cref{defn:trivial-extension}, resemble and share similarities with trivial extensions of rings, there is no obvious connection between the two concepts. For instance, the derived category of perfect complexes $\rmD^\mathrm{perf}(R\ltimes M)$, where $R$ is a commutative ring, $M$ is an $R$-module and $R\ltimes M$ is the trivial extension of $R$ by $M$, is a rigid tensor-triangulated category. Therefore, $\rmD^\mathrm{perf}(R\ltimes M)$ cannot be tensor-triangular equivalent to any trivial extension $\scT\ltimes \scK$, with $\scK\neq 0$, as the latter is non-rigid by~\Cref{lem:trivial-extension-rigid}.
\end{rem}

\section{Examples}\label{sec:examples}

First, we present three examples of non-rigid tensor-triangulated categories, where in each case the lattice of thick tensor-ideals is distributive.

\begin{ex}\label[ex]{ex:non-rigid-distributive}
Let $R$ be a commutative noetherian ring.
\begin{enumerate}[\rm(a)]
\item
The derived category $\rmD^{-}(\smod R)$ of bounded above complexes of finitely generated modules is a non-rigid tensor-triangulated category. If $R$ is artinian, then every thick tensor-ideal is radical. In this case, the lattice $\Thick^\ot(\rmD^{-}(\smod R))$ is isomorphic to the lattice of specialization closed subsets of $\Spec R$; see~\cite[Theorem 6.5]{MatsuiTakahashi17}.
\item
Let $Q$ be a finite acyclic quiver. The derived category $\rmD^\rmc(RQ)$ of the path algebra $RQ$, with vertex-wise tensor product, is a non-rigid tensor-triangulated category. Every thick tensor-ideal is radical and the lattice $\Thick^\ot(\rmD^\rmc(RQ))$ is isomorphic to the lattice of specialization closed subsets of $\Spc(\rmD^\rmc(RQ))=(\Spec R) \times Q_0$; see~\cite{Sabatini25}.
\item
Let $\scC$ be a finite EI-category and let $k$ be a field. The derived category $\rmD^\rmb(\smod k\scC)$ of bounded complexes of finite-dimensional modules over the category algebra $k\scC$ is a non-rigid tensor-triangulated category. Every thick tensor-ideal is radical and its spectrum is noetherian; see~\cite{Xu14}.
\end{enumerate}
\end{ex}

The next example is vastly more complicated but worthy of future exploration.

\begin{ex}\label[ex]{ex:relative-stable-modules}
Let $R$ be a commutative ring and let $G$ be a finite group. Let $\smod RG$ be the category of finitely generated $RG$-modules. The collection of short exact sequences in $\smod RG$ that split over $R$ is a Frobenius exact structure. The stable category of $\smod RG$ with respect to this exact structure is called the relative stable module category and is denoted by $\smodurel RG$~\cite{Broue09}. When $R$ is a field, $\smodurel RG$ coincides with the stable module category $\smodu RG$. The category $\smodurel RG$ is tensor-triangulated by tensoring modules over $R$. In general, $\smodurel RG$ is non-rigid. For example, let $S$ be a discrete valuation ring with residue field $k$ and uniformizing parameter $t\in S$. The main result of~\cite{BalandChirvasituStevenson19} is that the spectrum of $\smodurel (S/t^n)G$ decomposes as $\Spc(\smodurel (S/t^n)G)\cong \coprod_{i=1}^n \Spc(\smodu kG)$. However, the tensor-unit $S/t^n$ is indecomposable and hence $\smodurel (S/t^n)G$ cannot be rigid; see~\cite[Appendix A]{BensonIyengarKrause13}. See also~\cite[Section 3.2.4]{Stevenson26} for more explanations. From the decomposition of its spectrum, one can obtain a complete description of the lattice of radical thick tensor-ideals of $\smodurel (S/t^n)G$. The structure of the lattice of all thick tensor-ideals of $\smodurel (S/t^n)G$ is unclear.
\end{ex}

Using trivial extensions, we produce examples of non-distributive lattices of thick tensor-ideals. They all fit the same pattern.

\begin{ex}\label[ex]{ex:bounded-derived-categories}
Let $k$ be a field and let $\scT=\rmD^\rmb(\smod k)$. Let $\scK$ be either one of the derived categories $\rmD^\rmb(\smod kG)$, $\rmD^\mathrm{perf}(kG)$ or the stable module category $\smodu kG$ (which is equivalent to $\rmD^\rmb(\smod kG)/\rmD^\mathrm{perf}(kG)$) where $G$ is a finite group. Then $\scT$ acts on $\scK$, in each case via the appropriate base change functor induced from the canonical ring homomorphism $k\to kG$, i.e., via tensoring over $k$. Since $\scT=\thick(k)$, every thick subcategory of $\scT$ is a tensor-ideal and every thick subcategory of $\scK$ is a submodule. Therefore, by~\Cref{prop:non-distributive}, as soon as $\Thick(\scK)$ is non-distributive (and there are plenty such examples) $\Thick^\ot(\scT\ltimes \scK)$ is also non-distributive. By~\cite{GratzStevenson26}, for $k$ an algebraically closed field of characteristic $p$ dividing the order of $G$, we know that $\Thick(\rmD^\rmb(\smod kG))$ is distributive if and only if $G$ is $p$-nilpotent. Further, if $k$ is a field of characteristic $3$ and $G=C_3\times S_3$, then $\Thick(\smodu kG)$ is non-distributive.
\end{ex}

\begin{ex}\label[ex]{ex:ideals-Dynkin}
Let $k$ be a field and $kA_2$ the path algebra of the $A_2$ Dynkin quiver. Tensoring over $k$ defines an action of $\rmD^\rmb(\smod k)$ on $\rmD^\rmb(\smod kA_2)$. Let $\scC$ be the trivial extension $\rmD^\rmb(\smod k)\ltimes \rmD^\rmb(\smod kA_2)$. The lattice of thick subcategories of $\rmD^\rmb(\smod kA_2)$ is the diamond non-distributive lattice. It follows by~\Cref{prop:non-distributive} that the lattice $\Thick^\ot(\scC)$ is non-distributive. By using~\Cref{lem:ideals-trivial-extension} and~\Cref{lem:prime-ideals-trivial-extension} and the fact that $\Thick(\rmD^\rmb(\smod k))$ consists only of the trivial thick subcategories, we can easily compute the lattices of thick tensor-ideals and radical thick tensor-ideals of $\scC$:
\[
\begin{tikzcd}[row sep=1em, column sep=1em]
&\mathmakebox[\widthof{$\bullet$}]{\Thick^\ot(\scC)} &&&&& \mathmakebox[\widthof{$\bullet$}]{\Thick^{\sqrt{\ot}}(\scC)}
\\[-0.5em]
&\bullet \dar[dash] &&&&& \bullet \dar[dash]
\\
&\bullet \dar[dash] \drar[dash] \dlar[dash] &&&&& \bullet
\\
\bullet \drar[dash] &\bullet \dar[dash] &\bullet \dlar[dash]
\\
&\bullet
\end{tikzcd}
\]
\end{ex}

\begin{ex}\label[ex]{ex:projective-line}
Let $\mathbb{P}^1_k$ be the projective line over a field $k$ and consider the bounded derived category of coherent sheaves $\rmD^\rmb(\Coh \mathbb{P}^1_k)$. The derived inverse image functor $\rmD^\rmb(\smod k)\to \rmD^\rmb(\Coh \mathbb{P}^1_k)$ induced by the canonical morphism $\mathbb{P}^1_k\to \Spec(k)$ is a tensor triangulated functor and therefore defines an action of $\rmD^\rmb(\smod k)$ on $\rmD^\rmb(\Coh \mathbb{P}^1_k)$. Let $\scC$ be the trivial extension $\rmD^\rmb(\smod k)\ltimes \rmD^\rmb(\Coh \mathbb{P}^1_k)$. The lattice of thick subcategories of $\rmD^\rmb(\Coh \mathbb{P}^1_k)$ is non-distributive, since it contains a copy of the pentagon non-distributive lattice; see~\cite[Example 4.5.1]{GratzStevenson23}. By~\Cref{prop:non-distributive}, it follows that the lattice $\Thick^\ot(\scC)$ is non-distributive.
\end{ex}

\begin{ex}\label[ex]{ex:non-distributive-dbmod}
Let $k$ be a field and let $R=k[x,y]/(x,y)^2$. The lattice $\Thick(\rmD^\rmb(\smod R))$ is non-distributive; see~\cite[Example 8.3]{GratzStevenson26}. The tensor product of complexes defines an action of $\rmD^\mathrm{perf}(R)$ on $\rmD^\rmb(\smod R)$. By~\Cref{prop:non-distributive}, $\Thick^\ot(\rmD^\mathrm{perf}(R)\ltimes \rmD^\rmb(\smod R))$ is non-distributive.
\end{ex}

Trivial extensions can be used to construct lattices of tensor-ideals combinatorially. We illustrate this with two examples: free distributive lattices and chains.

\begin{ex}
The tensor product of $\scT$ is an action of $\scT$ on itself. By ~\Cref{lem:ideals-trivial-extension}, the lattice of thick tensor-ideals of $\scT\ltimes \scT$ is $\Thick^\ot(\scT\ltimes \scT)=\{I\times J\mid I,J\in \Thick^\ot(\scT):I\subseteq J\}$. We define a sequence of tensor-triangulated categories by iterating this process. Let $\scT_0=\scT$ and set $\scT_n=\scT_{n-1}\ltimes \scT_{n-1},\, \forall n\geq 1$. Let $\scT_0=\rmD^\rmb(\smod k)$. By the formula above, it follows that the lattice of thick tensor-ideals $\Thick^\ot(\scT_n)$ is precisely the free distributive lattice $FD(n)$ on $n$ generators, since it is the lattice of intervals of $\Thick^\ot(\scT_{n-1})$ and $\Thick^\ot(\scT_0)$ is a chain consisting of two elements. The number of elements of $FD(n)$ is given by the Dedekind number $M(n)$. Below, we depict the lattices $FD(0)$, $FD(1)$, $FD(2)$ and $FD(3)$. They have $M(0)=2$, $M(1)=3$, $M(2)=6$ and $M(3)=20$ elements, respectively. We do not attempt to graph the lattice $FD(4)$, as it is slightly more complicated with $168$ vertices and $504$ arrows. These lattices exhibit rapid combinatorial explosion and the number $M(n)$ has only been computed up to $n=9$; see~\cite{Jakel23,VHCGKRMP24}.
\[
\begin{tikzcd}[row sep=1em]
\bullet \dar[dash]
\\
\bullet
\end{tikzcd}
\qquad \qquad
\begin{tikzcd}[row sep=1em]
\bullet \dar[dash]
\\
\bullet \dar[dash]
\\
\bullet
\end{tikzcd}
\qquad \qquad
\begin{tikzcd}[row sep=1em,column sep=1em]
&\bullet \dar[dash]
\\
&\bullet \drar[dash] \dlar[dash]
\\
\bullet \drar[dash]&& \bullet \dlar[dash]
\\
&\bullet \dar[dash]
\\
&\bullet
\end{tikzcd}
\qquad \qquad
\begin{tikzcd}[row sep=1em,column sep=1em]
&& \bullet \dar[dash]
\\
&&\bullet \dar[dash] \arrow[dash]{dll} \arrow[dash]{drr}
\\
\bullet \dar[dash] \arrow[dash]{drr} && \bullet \arrow[dash]{dll} \arrow[dash]{drr} && \bullet \dar[dash] \arrow[dash]{dll}
\\
\bullet \dar[dash] \arrow[dash]{drr} && \bullet \dar[dash] \arrow[dash]{dr} && \bullet \arrow[dash]{dll} \dar[dash]
\\
\bullet \drar[dash] && \bullet \dlar[dash] \dar[dash] \drar[dash] & \bullet \dlar[dash] & \bullet \dlar[dash]
\\
&\bullet \drar[dash] \dar[dash] &\bullet \dlar[dash] \drar[dash] &\bullet \dlar[dash] \dar[dash]
\\
&\bullet \drar[dash] &\bullet \dar[dash] &\bullet \dlar[dash]
\\
&&\bullet \dar[dash]
\\
&&\bullet
\end{tikzcd}
\]
\end{ex}

\begin{ex}
The canonical injection $\scT\to \scT\ltimes \scT$ on the first component induces an action of $\scT$ on $\scT\ltimes \scT$. Take the trivial extension $\scT\ltimes (\scT\ltimes \scT)$. Iterate this process to obtain the sequence $\scT_n=\scT\ltimes \scT_{n-1},\, \forall n\geq 1$, where $\scT_0=\scT$. For $\scT=\rmD^\rmb(\smod k)$, the lattice of thick tensor-ideals of $\scT_n$ is the $(n+2)$-chain.
\end{ex}

\section{The homological spectrum of a trivial extension}\label{sec:homological-spectrum}
In this section, we show that the homological spectrum of a trivial extension $\scT\ltimes \scK$ is homeomorphic to the homological spectrum of $\scT$. We start by recalling some facts about module categories and their tensor structure and refer the reader to~\cite{BalmerKrauseStevenson20} for more in depth explanations. 

\begin{rec}\label[rec]{rec:module-categories}
Let $\scT$ be an essentially small tensor-triangulated category. We denote by $\Mod \scT$ the abelian category of additive functors $M\colon \scT^\rmop \to \Ab$ and by $\smod \scT$ its abelian subcategory of finitely presented objects.
The category $\Mod \scT$ receives $\scT$ via the Yoneda embedding
\[
\wh{(-)}\colon \scT\to \Mod \scT,\quad X\mapsto \Hom_\scT(-,X)
\]
and inherits a right exact colimit-preserving symmetric tensor via Day convolution, that makes the Yoneda embedding a tensor functor and $\smod \scT$ a tensor subcategory, in the following way: Let $M_1,M_2\in \Mod \scT$ and express them as colimits
\[
M_1=\colim_{\wh{X}\to M_1}\wh{X},\quad M_2=\colim_{\wh{Y}\to M_2} \wh{Y}
\]
over their respective categories of elements. Then
\[
M_1\ot M_2=\colim_{\wh{X}\to M_1}\colim_{\wh{Y}\to M_2} \wh{X\ot Y}.
\]
\end{rec}

Let $\scK$ be an essentially small triangulated category on which $\scT$ acts. Then $\Mod \scK$ inherits an action of $\Mod \scT$ via the analogous formula that defines the tensor product on $\Mod \scT$. Let
\[
M=\colim_{\wh{X}\to M}\wh{X}\in \Mod \scT, \quad N=\colim_{\wh{A}\to N}\wh{A}\in \Mod \scK
\]
and define
\[
M\ast N=\colim_{\wh{X}\to M}\colim_{\wh{A}\to N}\wh{X\ast A}.
\]
The functor $-\ast-\colon \Mod \scT\times \Mod \scK\to \Mod \scK$ is right exact and colimit-preserving in each variable and satisfies associativity: $(M_1\ot M_2) \ast N \cong M_1 \ast (M_2 \ast N)$ and unitality: $1\ast N\cong N,\, \forall M_1,M_2\in \Mod \scT,\, \forall N\in \Mod \scK$. Further, if $M\in \smod \scT$ and $N\in \smod \scK$, then $M\ast N\in \smod \scK$.

We define the \emph{trivial extension} $\Mod \scT\ltimes \Mod \scK$ as the product category $\Mod \scT\times \Mod \scK$ equipped with the tensor product defined by
\[
(M_1,N_1)\ot (M_2,N_2)=(M_1 \ot M_2, M_1\ast N_2 \oplus M_2\ast N_1),
\]
which is right exact and colimit-preserving in each variable and restricts on subcategories of finitely presented objects:
\[
-\ot - \colon (\smod \scT\times \smod \scK) \times (\smod \scT\times \smod \scK) \to \smod \scT\times \smod \scK.
\]

\begin{lem}\label[lem]{lem:module-categories-equivalence}
There is an equivalence $\Mod (\scT\ltimes \scK)\xr{\simeq} \Mod \scT\ltimes \Mod \scK$ of tensor abelian categories that restricts to an equivalence $\smod(\scT\ltimes \scK) \xr{\simeq} \smod \scT \ltimes \smod \scK$.
\end{lem}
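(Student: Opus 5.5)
The plan is to first produce the equivalence of underlying abelian categories and only afterwards check that it respects the tensor products. Since $\scT\ltimes\scK$ is the additive category $\scT\times\scK$, an additive functor $M\colon(\scT\times\scK)^\rmop\to\Ab$ is determined by its restrictions along the canonical injections of \Cref{rem:canonical-injection-projection}. Concretely, define
\[
\Phi\colon \Mod(\scT\ltimes\scK)\to \Mod\scT\times\Mod\scK,\quad M\mapsto (M(-,0),M(0,-)).
\]
As a candidate inverse take $\Psi(M_1,M_2)(X,A)=M_1(X)\oplus M_2(A)$. Additivity of $M$ and the decomposition $(X,A)\cong(X,0)\oplus(0,A)$ give a natural isomorphism $M\cong\Psi\Phi M$, and $\Phi\Psi\cong\mathrm{id}$ holds by inspection. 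Both functors are exact because they are computed pointwise, so $\Phi$ is an equivalence of abelian categories.

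Next I would restrict to finitely presented objects. Under $\Phi$ the representable $\wh{(X,A)}=\Hom(-,(X,A))$ goes to $(\wh X,\wh A)$. Indeed, $\Hom((Y,0),(X,A))=\Hom(Y,X)$ and $\Hom((0,B),(X,A))=\Hom(B,A)$, since morphisms in a product category are componentwise. Cokernels of maps between representables are therefore sent to pairs of such cokernels, so $\Phi(\smod(\scT\ltimes\scK))\subseteq \smod\scT\times\smod\scK$. Conversely, $(\wh X,0)=\Phi\wh{(X,0)}$ and $(0,\wh A)=\Phi\wh{(0,A)}$, and a pair of presentations is a direct sum of presentations. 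This gives essential surjectivity onto $\smod\scT\times\smod\scK$.

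The remaining step, and the main obstacle, is monoidality. Both tensor products are right exact and colimit-preserving in each variable: on $\Mod(\scT\ltimes\scK)$ this is Day convolution as in \Cref{rec:module-categories}, and on $\Mod\scT\ltimes\Mod\scK$ it was noted above. Every module is a canonical colimit of representables, so it suffices to produce natural isomorphisms $\Phi(\wh{P}\ot\wh{Q})\cong\Phi\wh P\ot\Phi\wh Q$ for $P,Q\in\scT\ltimes\scK$ that are compatible with the structure maps. On representables we compute
\[
\Phi(\wh{(X,A)}\ot\wh{(Y,B)})=\Phi\wh{(X\ot Y,X\ast B\oplus Y\ast A)}=(\wh{X\ot Y},\wh{X\ast B}\oplus\wh{Y\ast A}).
\]
By the defining formulas for Day convolution and for the action $\ast$ on representables, this agrees with $(\wh X,\wh A)\ot(\wh Y,\wh B)=(\wh X\ot\wh Y,\wh X\ast\wh B\oplus\wh Y\ast\wh A)$.

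The isomorphism is natural in both arguments because the Yoneda embeddings $\scT\to\Mod\scT$ and $\scK\to\Mod\scK$ intertwine $\ot$ and $\ast$. It then extends uniquely to all modules by expressing $M_1,M_2$ as colimits over their categories of elements, using that $\Phi$ preserves colimits. The unit is $\Phi\wh{(1,0)}=(\wh1,0)$. The associativity and symmetry coherences for the extended isomorphism reduce to those on representables, where they are inherited from the coherence isomorphisms defining the tensor structure of $\scT\ltimes\scK$. This is the tedious part, and it is handled exactly as the verification in \Cref{sec:trivial-extensions}. Since both tensor products preserve finitely presented objects, the monoidal equivalence restricts to $\smod(\scT\ltimes\scK)\simeq\smod\scT\ltimes\smod\scK$.
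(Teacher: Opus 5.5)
Your proof is correct and follows essentially the same route as the paper. Both arguments use an equivalence $\Mod(\scT\ltimes\scK)\simeq\Mod\scT\times\Mod\scK$ sending $\wh{(X,A)}$ to $(\wh X,\wh A)$, check monoidality on representables, and extend using that the equivalence and both tensor products preserve colimits. The only difference is that you build the functor explicitly by restriction along the canonical injections and write down its inverse $\Psi$, whereas the paper obtains it from the universal property of the Yoneda embedding and just asserts that it is an equivalence, so your version is somewhat more complete.
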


\begin{proof}
The equivalence between $\Mod(\scT\times \scK)$ and $\Mod \scT\times \Mod \scK$ is a standard fact: Define the functor $\scT\times \scK\to \Mod \scT \times \Mod \scK$ that sends an object $(X,A)\in \scT\times \scK$ to $(\wh{X},\wh{A})\in \Mod \scT\times \Mod \scK$, i.e., the product of the Yoneda embeddings. This is a coproduct-preserving homological functor, so by the universal property of the Yoneda embedding $\scT\times \scK\to \Mod(\scT\times \scK)$, there is a unique exact colimit-preserving functor $F\colon \Mod(\scT\times \scK)\to \Mod \scT \times \Mod \scK$ that makes the following triangle commute:
\[
\begin{tikzcd}
\scT\times \scK \rar[hook] \drar[hook] & \Mod(\scT\times \scK) \dar[dashed,"\exists!"',"F"]
\\
& \Mod \scT \times \Mod \scK.
\end{tikzcd}
\]
The functor $F$ is an equivalence. To verify that $F$ preserves the trivial extension tensor structures, check the required property on objects of the form $\wh{(X,A)}\in \Mod(\scT\times \scK)$, i.e., objects in the image of the Yoneda embedding, and use the fact that $F$ preserves colimits. The fact that $F$ restricts to subcategories of finitely presented objects is clear.
\end{proof}

A Serre subcategory $\scC$ of $\smod \scK$ is called a \emph{Serre submodule} if $\smod \scT \ast \scC\subseteq \scC$.

\begin{lem}\label[lem]{lem:Serre-ideals}
The Serre tensor-ideals of $\smod \scT\ltimes \smod \scK$ are of the form $\scB\times \scC$, where $\scB$ is a Serre tensor-ideal of $\smod \scT$ and $\scC$ is a Serre submodule of $\smod \scK$ such that $\scB\ast \smod \scK \subseteq \scC$.
\end{lem}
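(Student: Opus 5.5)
The plan is to follow the proof of \Cref{lem:ideals-trivial-extension} in the abelian setting, replacing thick subcategories by Serre subcategories.

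First, I will show that every Serre subcategory of the product abelian category $\smod \scT\times \smod \scK$ splits as a product. Let $\scS$ be a Serre subcategory and set
\[
\scS_\scT=\{M\mid \exists N:(M,N)\in \scS\},\qquad \scS_\scK=\{N\mid \exists M:(M,N)\in \scS\}.
\]
Since $(M,0)$ and $(0,N)$ are subobjects (indeed direct summands) of $(M,N)$, and $\scS$ is closed under subobjects, we get $(M,0),(0,N)\in \scS$ whenever $(M,N)\in\scS$. Closure under extensions, in particular under direct sums, then gives $\scS=\scS_\scT\times \scS_\scK$.

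Next, I check that $\scS_\scT$ and $\scS_\scK$ are Serre. Exact sequences in the product are componentwise, so an exact sequence $0\to M'\to M\to M''\to 0$ in $\smod\scT$ gives an exact sequence $0\to (M',0)\to (M,0)\to (M'',0)\to 0$. Hence the Serre property of $\scS$ transfers to $\scS_\scT$, and the same argument works for $\scS_\scK$. Conversely, a product $\scB\times\scC$ of Serre subcategories is Serre, again because exactness is componentwise.

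For the tensor conditions, the argument runs in two directions.
\begin{enumerate}[\rm(a)]
\item Suppose $\scB$ is a Serre tensor-ideal, $\scC$ is a Serre submodule and $\scB\ast\smod\scK\subseteq\scC$. For $(M,N)$ arbitrary and $(M',N')\in\scB\times\scC$, the product is
\[
(M\ot M',\; M\ast N'\oplus M'\ast N).
\]
Its first component lies in $\scB$. In the second component, $M\ast N'\in \scC$ because $\scC$ is a submodule, and $M'\ast N\in\scC$ by the hypothesis $\scB\ast\smod\scK\subseteq\scC$. Since $\scC$ is closed under sums, the product lies in $\scB\times\scC$.
\item Suppose $\scB\times\scC$ is a tensor-ideal. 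Tensoring $(B,0)$ with $(M,0)$ gives $(M\ot B,0)$, so $\scB$ is a tensor-ideal. Tensoring $(0,C)$ with $(M,0)$ gives $(0,M\ast C)$, so $\scC$ is a submodule. Tensoring $(B,0)$ with $(0,N)$ gives $(0,B\ast N)$, so $\scB\ast\smod\scK\subseteq\scC$.
\end{enumerate}
These computations use the formula for the trivial extension tensor on finitely presented objects, together with unitality and symmetry as recorded before \Cref{lem:module-categories-equivalence}.

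The main obstacle is only bookkeeping: one has to confirm that "Serre tensor-ideal" in $\smod\scT\ltimes\smod\scK$ means closure under tensoring with arbitrary finitely presented objects $(M,N)$. One also has to confirm that the restriction of $-\ot-$ to $\smod\scT\times\smod\scK$ described above lands in finitely presented objects, which was already noted in the text. With these in place, everything else is componentwise.
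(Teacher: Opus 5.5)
Your proof is correct and follows essentially the same route as the paper. Both split Serre subcategories of the product componentwise and read off the three tensor conditions from products with objects $(M,0)$ and $(0,N)$; the paper phrases the ideal and submodule parts as inverse images along the canonical injections (an exact tensor functor and an exact action-preserving functor), which amounts to your explicit computations $(M,0)\ot(B,0)$ and $(M,0)\ot(0,C)$.
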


\begin{proof}
The proof follows the same logic as that of~\Cref{lem:ideals-trivial-extension}. The Serre subcategories of $\smod \scT\times \smod \scK$ are of the form $\scB\times \scC$, where $\scB$ is a Serre subcategory of $\smod \scT$ and $\scC$ is a Serre subcategory of $\smod \scK$. The canonical injection $\smod \scT\to \smod \scT \ltimes \smod \scK$ is an exact tensor functor, hence $\scB$ is a Serre tensor-ideal of $\smod \scT$. The canonical injection $\smod \scK\to \smod \scT\ltimes \smod \scK$ is an action-preserving exact functor (where the action of $\smod \scT$ on $\smod \scT\ltimes \smod \scK$ is defined via restriction along the canonical injection) hence $\scC$ is a Serre submodule of $\smod \scK$. Using the fact that $\scB\times \scC$ is a tensor-ideal of $\smod \scT\ltimes \smod \scK$, one deduces that $\scB\ast \smod \scK\subseteq \scC$. Conversely, the condition $\scB \ast \smod \scK\subseteq \scC$ ensures that the Serre subcategory $\scB\times \scC$ is a tensor-ideal.
\end{proof}

\begin{rec}\label{rec:homological-spectrum}
The homological spectrum of $\scT$, denoted by $\Spc^\rmh(\scT)$, is the set of proper maximal Serre tensor-ideals of $\smod \scT$ equipped with the topology with basis of closed subsets those of the form $\supp^\rmh(X)$, where $X\in \scT$ and $\supp^\rmh$ is the homological support; see~\cite{Balmer20a,Balmer20b}.
\end{rec}

\begin{prop}\label[prop]{prop:homological-spectrum-trivial-extension}
There is a homeomorphism $\Spc^\rmh(\scT\ltimes \scK)\cong \Spc^\rmh(\scT)$.
\end{prop}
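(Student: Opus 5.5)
We will mirror the proof of \Cref{lem:prime-ideals-trivial-extension}, now at the level of module categories. By \Cref{lem:module-categories-equivalence}, $\smod(\scT\ltimes\scK)\simeq \smod\scT\ltimes\smod\scK$ as tensor abelian categories. So $\Spc^\rmh(\scT\ltimes\scK)$ is the set of maximal proper Serre tensor-ideals of $\smod\scT\ltimes\smod\scK$. By \Cref{lem:Serre-ideals}, these all have the form $\scB\times\scC$. The plan is to show that the maximal ones are exactly $\scB\times\smod\scK$ with $\scB\in\Spc^\rmh(\scT)$. We then check that the resulting bijection matches homological supports.

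\emph{Step 1.} We show that a maximal $\scB\times\scC$ has $\scC=\smod\scK$. For any $\scB$, the subcategory $\scB\times\smod\scK$ is a Serre tensor-ideal by \Cref{lem:Serre-ideals}, since $\smod\scK$ is trivially a Serre submodule containing $\scB\ast\smod\scK$. It is proper exactly when $\scB$ is proper. Now let $\scB\times\scC$ be proper. Then $\scB\neq\smod\scT$, since otherwise $\hat 1\ast\smod\scK=\smod\scK\subseteq\scC$. Hence $\scB\times\scC\subseteq\scB\times\smod\scK$, which is still proper, and maximality forces equality. (The idea of using nilpotence of $(0,N)$, as in the prime case, would require radicality, which we do not have; the inclusion argument avoids this.)

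\emph{Step 2.} We show that $\scB\times\smod\scK$ is maximal if and only if $\scB$ is maximal in $\smod\scT$. The inclusions $\scB\subseteq\scB'$ and $\scB\times\smod\scK\subseteq\scB'\times\smod\scK$ correspond, so one direction is immediate. Conversely, if $\scB\times\smod\scK\subseteq\scB'\times\scC'$ with the right side proper, then $\scC'=\smod\scK$ and $\scB'$ is proper. Hence maximality of $\scB$ gives $\scB'=\scB$. This yields a bijection $\phi\colon\Spc^\rmh(\scT)\to\Spc^\rmh(\scT\ltimes\scK)$, $\scB\mapsto\scB\times\smod\scK$. Equivalently, $\phi$ is the map induced by the projection $\scT\ltimes\scK\to\scT$: the Serre ideal $\scB\times\smod\scK$ is the preimage of $\scB$ under the exact tensor projection $\smod\scT\ltimes\smod\scK\to\smod\scT$.

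\emph{Step 3: topology.} Recall that $\supp^\rmh(Z)=\{\mathcal B\mid \hat Z\notin\mathcal B\}$. Under $\phi$, we get $\supp^\rmh(X,A)=\{\scB\times\smod\scK\mid (\hat X,\hat A)\notin\scB\times\smod\scK\}=\phi(\supp^\rmh(X))$. Hence $\phi$ maps basic closed sets onto basic closed sets, and every basic closed set of the source arises as $\supp^\rmh(X)=\phi^{-1}(\supp^\rmh(X,0))$. So $\phi$ is a homeomorphism.

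I expect the main obstacle to be checking, in Step 1, that the unit of $\smod\scT$ acts as the identity on $\smod\scK$, and that the identification in \Cref{lem:module-categories-equivalence} sends $\widehat{(X,A)}$ to $(\hat X,\hat A)$. Both were already asserted in the paper.
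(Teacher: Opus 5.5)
Your proof is correct. It follows the paper's outline: transport along \Cref{lem:module-categories-equivalence}, classify the maximal proper Serre tensor-ideals as $\scB\times\smod\scK$ via \Cref{lem:Serre-ideals}, then compare supports. The key step, however, is argued differently.

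The paper first shows that $\scB$ is proper. It then shows $\scB$ is maximal by enlarging to $\scB'\times\scC$, justified by the assertion that a maximal Serre tensor-ideal is maximal among all Serre subcategories. Only then does it obtain $\scC=\smod\scK$ from primality, ``in the same way as'' in \Cref{lem:prime-ideals-trivial-extension}. You reverse the order. You get $\scC=\smod\scK$ at once from the containment $\scB\times\scC\subseteq\scB\times\smod\scK$ in a proper ideal. After that, maximality of $\scB$ is automatic, because every $\scB'\times\smod\scK$ is an ideal.

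This change does real work. First, you need no primality input. The nilpotence argument would require $\scB\times\scC$ itself to be prime, since primality of $\scB$ says nothing about $(0,N)$, as your parenthetical remark anticipates. Primality of maximal Serre tensor-ideals is a fact from the rigid setting, not obviously available for the non-rigid $\scT\ltimes\scK$. Second, you bypass the Serre-subcategory assertion, which fails in general. For $\scT=\rmD^\rmb(\smod k)$, the category $\smod\scT\simeq\scT$ is semisimple with simple objects $\wh{k[n]}$, and its only proper Serre tensor-ideal is $0$. Yet $0$ is not a maximal Serre subcategory. Likewise, $0\times\smod\scK$ is not maximal among Serre subcategories of $\smod\scT\ltimes\smod\scK$.

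Your Step 3 also spells out what the paper dismisses as ``clearly a homeomorphism''. The paper's ordering gains only a closer formal parallel with the case of the Balmer spectrum.
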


\begin{proof}
Let $\scB\times \scC$ be a proper maximal Serre tensor-ideal of $\smod \scT\ltimes \smod \scK$. Then $\scB$ is a proper Serre tensor-ideal of $\smod \scT$. Otherwise, $\scB=\smod \scT$ combined with the condition $\scB\ast \smod \scK\subseteq \scC$ implies that $\scC=\smod \scK$, contradicting the assumption that $\scB\times \scC$ is proper. Further, $\scB$ is maximal. Otherwise, there would be an inclusion $\scB\subsetneq \scB'$, implying that $\scB\times \scC\subsetneq \scB'\times \scC$, contradicting maximality of $\scB\times \scC$. Note that $\scB'\times \scC$ does not need to be a tensor-ideal a priori, since maximality of $\scB\times \scC$ as a Serre tensor-ideal implies its maximality as a Serre subcategory. Since $\scB$ is a maximal Serre tensor-ideal, it is prime with respect to the tensor product. It follows that $\scC=\smod \scK$ in the same way as in~\Cref{lem:prime-ideals-trivial-extension}. Any Serre subcategory of the form $\scB\times \smod \scK$ is a tensor-ideal and if $\scB$ is maximal, then so is $\scB\times \smod \scK$. We have shown that the proper maximal Serre tensor-ideals of $\smod \scT\ltimes \smod \scK$ are of the form $\scB\times \smod \scK$, where $\scB$ is a proper maximal Serre tensor-ideal of $\smod \scT$. The equivalence $\smod(\scT\ltimes \scK)\xr{\simeq} \smod \scT\ltimes \smod \scK$ established in~\Cref{lem:module-categories-equivalence} now sets up a bijection between $\Spc^\rmh(\scT\ltimes \scK)$ and $\Spc^\rmh(\scT)$, which is clearly a homeomorphism.
\end{proof}

\section{Compactly generated trivial extensions}\label{sec:cg}

Let $\scT$ be a compactly generated tensor-triangulated category. Specifically, we assume that the tensor product preserves all coproducts and that the subcategory of compact objects $\scT^\rmc$ is a tensor subcategory. We denote by $[-,-]$ the internal hom functor, which exists by Brown representability, and by $X^\vee=[X,1]$ the dual of an object $X\in \scT$. For the time being, we do not assume that $\scT^\rmc$ is necessarily rigid. Let $\scK$ be a compactly generated triangulated category on which $\scT$ acts. We assume that the action preserves all coproducts.

The trivial extension $\scT\ltimes \scK$ is a compactly generated tensor-triangulated category. Since its tensor unit $(1,0)$ is compact, every rigid object of $\scT\ltimes \scK$ is compact. In fact, it is rather easy to give a full description of the subcategory of rigid objects.

\begin{lem}\label[lem]{lem:rigid-objects-trivial-extension}
Let $(X,A)\in \scT\ltimes \scK$. Then $(X,A)^\vee\cong (X^\vee,0)$. The subcategory of rigid objects of $\scT\ltimes \scK$ is $\scT^\rmr\times 0$, where $\scT^r$ is the subcategory of rigid objects of $\scT$.
\end{lem}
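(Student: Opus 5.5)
The plan is to compute the internal hom of $\scT\ltimes \scK$ into the unit $(1,0)$ directly from its universal property, using that $\scT\ltimes \scK$ is the product category. For $(Y,B)\in \scT\ltimes\scK$ we have
\[
\Hom((Y,B)\ot(X,A),(1,0))=\Hom((Y\ot X, Y\ast A\oplus X\ast B),(1,0))\cong \Hom_\scT(Y\ot X,1),
\]
because morphisms in a product category are pairs and there are no nonzero maps into $0\in\scK$. The right-hand side is $\Hom_\scT(Y,X^\vee)\cong\Hom((Y,B),(X^\vee,0))$. These isomorphisms are natural in $(Y,B)$, so Yoneda gives $(X,A)^\vee\cong(X^\vee,0)$. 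I will also record the general formula obtained the same way,
\[
[(X,A),(Z,C)]\cong\bigl([X,Z],\,?\bigr),
\]
but only the case $(Z,C)=(1,0)$ is needed here. No existence issue arises: Brown representability provides the internal hom, and the computation identifies it.

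For rigidity I use the criterion that an object $P$ is rigid if and only if the canonical map $P^\vee\ot Q\to[P,Q]$ is an isomorphism for all $Q$. Equivalently, since the unit $(1,0)$ is compact and rigid objects are compact, $P$ is rigid if and only if the evaluation/coevaluation data exist, i.e.\ there is a map $(1,0)\to P\ot P^\vee$ satisfying the triangle identities.

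Suppose $(X,A)$ is rigid. Then $(X,A)^\vee=(X^\vee,0)$ is rigid as well. Apply the biduality isomorphism $(X,A)\cong (X,A)^{\vee\vee}$ and compute the right-hand side with the first step twice:
\[
(X,A)^{\vee\vee}\cong(X^\vee,0)^\vee\cong (X^{\vee\vee},0).
\]
So $A=0$, and $X\cong X^{\vee\vee}$. To conclude that $X\in\scT^\rmr$, note that the canonical injection $\scT\to\scT\ltimes\scK$ is fully faithful and tensor. Also, $\Hom((Y,0)\ot(X,0),(Z,0))\cong\Hom_\scT(Y\ot X,Z)$, so $[(X,0),(Z,0)]=([X,Z],0)$, using the same product-category argument as before. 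Hence the rigidity map $(X,0)^\vee\ot(Z,0)\to[(X,0),(Z,0)]$ for $(X,0)$ restricts to the map $X^\vee\ot Z\to[X,Z]$ in the first component, and this map is an isomorphism. Alternatively, the projection $\scT\ltimes\scK\to\scT$ is a tensor functor and preserves duals and rigid objects, and it sends $(X,A)$ to $X$. This is quicker, so I will use it.

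Conversely, if $X\in\scT^\rmr$, the injection $\scT\to\scT\ltimes\scK$ is a tensor functor, so $(X,0)$ is rigid with dual $(X^\vee,0)$. This gives the equality of the subcategory of rigid objects with $\scT^\rmr\times0$.

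The main obstacle is justifying that $A=0$. This requires the dual formula to hold on the nose, in particular that the biduality map is an isomorphism, rather than only that $P^\vee$ is computed correctly. An alternative check is to take $Q=(0,C)$ and test the rigidity map: $(X,A)^\vee\ot(0,C)=(0,X^\vee\ast C)$. If $A\neq0$, then $\Hom((0,A)\ot\cdots)$ detects a discrepancy. I would fall back on this if biduality feels too quick.
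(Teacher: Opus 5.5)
Your proposal is correct and follows the paper's proof. The same Yoneda computation gives $(X,A)^\vee\cong(X^\vee,0)$, and your biduality step $(X,A)\cong(X,A)^{\vee\vee}\cong(X^{\vee\vee},0)$ is exactly the paper's ``duality on rigid objects is an involution'' argument forcing $A=0$. You also make explicit, via the projection and injection being tensor functors, the rigidity of $X$ and the converse inclusion, which the paper leaves implicit. The hedging in your last paragraph is unnecessary, since biduality for rigid objects is standard.
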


\begin{proof}
Let $(X,A)$ and $(Y,B)\in \scT\ltimes \scK$. Then
\begin{align*}
\Hom_{\scT\ltimes \scK}((Y,B),(X,A)^\vee)&=\Hom_{\scT\ltimes \scK}((Y,B),[(X,A),(1,0)])\\
&=\Hom_{\scT\ltimes\scK}((Y,B)\ot (X,A),(1,0)) \\
&=\Hom_{\scT\ltimes \scK}((Y\ot X,Y\ast A\oplus X\ast B),(1,0))\\
&=\Hom_{\scT}(Y\ot X,1)\\
&=\Hom_{\scT}(Y,X^\vee)\\
&=\Hom_{\scT\ltimes \scK}((Y,B),(X^\vee,0)).
\end{align*}
Consequently, $(X,A)^\vee\cong (X^\vee,0)$. Since duality on the subcategory of rigid objects is an involution, it follows that if $(X,A)$ is rigid, then $A=0$ and $X$ must be rigid. We infer that the subcategory of rigid objects of $\scT\ltimes \scK$ is $\scT^\rmr\times 0$.
\end{proof}

Now we examine Brown--Comenetz duals. There are two possible variations: First, classically, the Brown--Comenetz dual of a compact object $X\in \scT$ is defined as the object $IX$ representing the functor $\Hom_\bbZ(\Hom_\scT(X,-),\bbQ/\bbZ)$. Taking advantage of the tensor structure, one can define the Brown--Comenetz dual of an arbitrary object $X\in \scT$ as the object $X^\dagger$ representing the functor $\Hom_\bbZ(\Hom_\scT(1,X\ot -),\bbQ/\bbZ)$. If $X$ is rigid (therefore $X$ and $X^\vee$ are necessarily compact) then $X^\dagger\cong IX^\vee$ and $IX\cong (X^\vee)^\dagger$, so the two notions coincide up to duality. By a similar computation as in~\Cref{lem:rigid-objects-trivial-extension}, we can find the Brown--Comenetz duals, with respect to the tensor structure, of objects in $\scT\ltimes \scK$.

\begin{lem}\label[lem]{lem:bc-duals}
Let $(X,A)$ be an object of $\scT\ltimes \scK$. Then $(X,A)^\dagger\cong (X^\dagger,0)$.
\end{lem}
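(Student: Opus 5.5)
The plan is to mimic the Yoneda computation in the proof of \Cref{lem:rigid-objects-trivial-extension}. The object $(X,A)^\dagger$ is, by definition, the object of $\scT\ltimes \scK$ representing
\[
(Y,B)\longmapsto \Hom_\bbZ\bigl(\Hom_{\scT\ltimes\scK}((1,0),(X,A)\ot (Y,B)),\bbQ/\bbZ\bigr),
\]
which exists by Brown representability since $\scT\ltimes \scK$ is compactly generated. So it suffices to identify this functor with $\Hom_{\scT\ltimes\scK}(-,(X^\dagger,0))$, naturally in $(Y,B)$; uniqueness of representing objects then gives the claimed isomorphism.

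First, expand the tensor product: $(X,A)\ot (Y,B)=(X\ot Y, X\ast B\oplus Y\ast A)$. Since morphisms in the product category split componentwise and the unit is $(1,0)$, we get
\[
\Hom_{\scT\ltimes\scK}\bigl((1,0),(X\ot Y,X\ast B\oplus Y\ast A)\bigr)\cong \Hom_\scT(1,X\ot Y)\oplus \Hom_\scK(0,X\ast B\oplus Y\ast A)=\Hom_\scT(1,X\ot Y).
\]
Applying $\Hom_\bbZ(-,\bbQ/\bbZ)$ and using the defining property of $X^\dagger$ in $\scT$ gives $\Hom_\scT(Y,X^\dagger)$. Finally, $\Hom_\scT(Y,X^\dagger)\cong \Hom_\scT(Y,X^\dagger)\oplus\Hom_\scK(B,0)=\Hom_{\scT\ltimes\scK}((Y,B),(X^\dagger,0))$.

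The only point needing care, and the main (mild) obstacle, is naturality in $(Y,B)$. For a morphism $(f,g)\colon (Y,B)\to (Y',B')$, the tensor $(1_X,0)\ot(f,g)$ has first component $1_X\ot f$. The $\scK$-component is discarded by $\Hom((1,0),-)$, so every isomorphism above is natural in $(Y,B)$. One should also note that the contravariant functor in question sends coproducts to products and triangles to long exact sequences. This holds because the tensor product on $\scT\ltimes\scK$ preserves coproducts and $(1,0)$ is compact, which justifies the existence of $(X,A)^\dagger$. In any case, the explicit computation already exhibits the representing object directly, so Brown representability is not strictly needed.
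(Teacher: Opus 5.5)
Your proposal is correct and follows the paper's own proof: the same Yoneda chain $\Hom((Y,B),(X,A)^\dagger)\cong \Hom_\bbZ(\Hom_\scT(1,X\ot Y),\bbQ/\bbZ)\cong \Hom_\scT(Y,X^\dagger)\cong \Hom((Y,B),(X^\dagger,0))$, with your added check of naturality in $(Y,B)$. One small slip there: the identity of $(X,A)$ is $(1_X,1_A)$, not $(1_X,0)$, but this does not affect the argument, since only the first component $1_X\ot f$ survives $\Hom((1,0),-)$.
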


\begin{proof}
Let $(Y,B)\in \scT\ltimes \scK$. Then
\begin{align*}
\Hom_{\scT\ltimes \scK}((Y,B),(X,A)^\dagger)&= \Hom_\bbZ(\Hom_{\scT\ltimes \scK}((1,0),(X,A)\ot (Y,B)),\bbQ/\bbZ)\\
&=\Hom_\bbZ(\Hom_{\scT\ltimes \scK}((1,0),(X\ot Y,X\ast B\oplus Y\ast A)),\bbQ/\bbZ)\\
&=\Hom_\bbZ(\Hom_\scT(1,X\ot Y),\bbQ/\bbZ)\\
&= \Hom_\scT(Y,X^\dagger)\\
&= \Hom_{\scT\ltimes \scK}((Y,B),(X^\dagger,0)).
\end{align*}
Conclusion: $(X,A)^\dagger\cong (X^\dagger,0)$.
\end{proof}

\begin{rem}\label[rem]{rem:bc-duals-compact-non-rigid}
If $(X,A)\in \scT\ltimes \scK$ is rigid, then by~\Cref{lem:rigid-objects-trivial-extension}, $A=0$ and $X$ is rigid. We have $(X,0)^\dagger=(X^\dagger,0)=(IX^\vee,I0)=I(X^\vee,0)=I(X,0)^\vee$, as expected. In case $(X,A)$ is compact but not rigid and $A\neq 0$, then $IA\neq 0$ and $I(X,A)=(IX,IA)$ but $((X,A)^\vee)^\dagger=((X^\vee)^\dagger,0)$.
\end{rem}

Next we discuss localizing and smashing tensor-ideals of $\scT\ltimes \scK$.

\begin{lem}\label[lem]{lem:localizing-ideals}
The localizing tensor-ideals of $\scT\ltimes \scK$ are of the form $L\times N$, where $L$ is a localizing tensor-ideal of $\scT$ and $N$ is a localizing submodule of $\scK$ such that $L\ast \scK\subseteq N$.
\end{lem}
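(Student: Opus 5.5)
I will follow the proof of \Cref{lem:ideals-trivial-extension} closely. The only new ingredient is that "thick" is replaced by "localizing", so I need to control arbitrary coproducts. The first step is to show that every localizing subcategory $J$ of $\scT\times\scK$ splits as a product. Coproducts in $\scT\times\scK$ are computed componentwise. Define
\[
J_\scT=\{X\in\scT\mid \exists A\in\scK:(X,A)\in J\}, \qquad J_\scK=\{A\in\scK\mid \exists X\in\scT:(X,A)\in J\}.
\]
If $(X,A)\in J$, then $(X,0)$ and $(0,A)$ are direct summands of $(X,A)$. Localizing subcategories of a category with countable coproducts are thick, by the Eilenberg swindle. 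Hence $(X,0),(0,A)\in J$, and therefore $J=J_\scT\times J_\scK$.

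It then remains to check that $J_\scT$ and $J_\scK$ are localizing. Each is the inverse image of $J$ under the canonical injection $\scT\to\scT\times\scK$, respectively $\scK\to\scT\times\scK$. These injections are coproduct-preserving triangulated functors, so the inverse images are localizing. Conversely, if $L\subseteq\scT$ and $N\subseteq\scK$ are localizing, then $L\times N$ is closed under triangles, shifts and coproducts, since all of these are computed componentwise. This gives $\loc$-subcategories of $\scT\times\scK$ equal to products $\loc(\scT)\times\loc(\scK)$.

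Next I add the tensor condition, exactly as before. Suppose $L$ is a localizing tensor-ideal and $N$ is a localizing submodule with $L\ast\scK\subseteq N$. For $(X,A)\in\scT\ltimes\scK$ and $(Y,B)\in L\times N$,
\[
(X,A)\ot(Y,B)=(X\ot Y,\,X\ast B\oplus Y\ast A).
\]
Here $X\ot Y\in L$ and $X\ast B\in N$, and $Y\ast A\in L\ast\scK\subseteq N$. So $L\times N$ is a tensor-ideal.

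For the converse, suppose $L\times N$ is a localizing tensor-ideal. The injection $\scT\to\scT\ltimes\scK$ is a tensor functor, so the inverse image $L$ is a tensor-ideal. The injection $\scK\to\scT\ltimes\scK$ preserves the $\scT$-action of \Cref{rem:canonical-injection-projection}, namely $X\ast(0,A)=(0,X\ast A)$, so $N$ is a submodule. Finally, for $X\in L$ and $A\in\scK$ we have $(X,0)\ot(0,A)=(0,X\ast A)\in L\times N$, which gives $L\ast\scK\subseteq N$.

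No step here is a real obstacle. The only point needing care is that localizing subcategories are closed under direct summands, which is needed to split $J$ as a product. It follows from the Eilenberg swindle, since the big categories $\scT$ and $\scK$ have countable coproducts.
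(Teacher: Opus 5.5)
Your proof is correct and is the argument the paper intends. The paper's proof just says it is the same as that of \Cref{lem:ideals-trivial-extension}. You carry that argument out, and you correctly supply the one point needing a word: localizing subcategories are closed under direct summands (via the Eilenberg swindle), so every localizing subcategory of $\scT\times\scK$ splits as $J_\scT\times J_\scK$.
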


\begin{proof}
The proof is exactly the same as that of~\Cref{lem:ideals-trivial-extension}.
\end{proof}

Recall that a \emph{smashing subcategory} $\scS$ of $\scT$ is a localizing subcategory such that the quotient functor $\scT\to \scT/\scS$ has a coproduct-preserving right adjoint. From now on, assume that $\scT$ is rigidly-compactly generated, i.e., $\scT^\rmc$ is rigid.

\begin{rec}\label[rec]{rec:smashing-ideals}
By~\cite{BalmerFavi11}, the smashing tensor-ideals of $\scT$ are completely determined by idempotent triangles $e\to 1\to f$, where $e\ot e\cong e$, $f\ot f\cong f$ and $e\ot f=0$ (these three conditions being equivalent). Specifically, if $\scS$ is a smashing tensor-ideal of $\scT$, then its associated left (resp.~right) idempotent $e_\scS$ (resp.~$f_\scS$) is the image of $1$ under the acyclization (resp.~localization) functor $\scT\to \scT$ and it holds that $\scS=\loc^\ot(e_\scS)=\im(e_\scS\ot -)=\Ker(f_\scS\ot -)$. The acyclization and localization functors are $\Gamma_\scS\cong \Gamma_\scS(1)\ot-=e_\scS\ot -$ and $L_\scS\cong L_\scS(1)\ot -=f_\scS\ot -$, respectively.
\end{rec}

\begin{rem}\label[rem]{rem:non-rigid-smashing}
In general, in a compactly generated tensor-triangulated category, the localizing tensor-ideal generated by a left idempotent is smashing. Rigidity is critical in establishing the converse. The issue is that if $\scS$ is a smashing tensor-ideal, the associated acyclization and localization functors $\Gamma_\scS$ and $L_\scS$ may not be isomorphic to $\Gamma_\scS(1)\ot -$ and $L_\scS(1)\ot -$ if rigidity is not assumed.
\end{rem}

\begin{prop}\label[prop]{prop:smashing-ideals-trivial-extension}
The smashing tensor-ideals of $\scT\ltimes \scK$ are of the form $\scS\times \scM$, where $\scS$ is a smashing tensor-ideal of $\scT$ and $\scM$ is a smashing submodule of $\scK$ such that $\scS\ast\scK \subseteq \scM$. The left idempotents of $\scT\ltimes \scK$ are of the form $(e,A)$, where $e$ is a left idempotent of $\scT$, say with structure map $\gamma\colon e\to 1$, and $e\ast A\xr{\gamma \ast A} A$ is an isomorphism. The smashing tensor ideals of $\scT\ltimes \scK$ generated by left idempotents stand in bijection with smashing tensor-ideals of $\scT$: They are of the form $\scS\times (\scS\ast \scK)$, where $\scS$ is a smashing tensor-ideal of $\scT$.
\end{prop}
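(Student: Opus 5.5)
The plan is to treat the three assertions separately, in each case reducing to componentwise statements in $\scT\times\scK$ as in \Cref{lem:ideals-trivial-extension} and \Cref{lem:localizing-ideals}.

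\emph{Smashing ideals.} By \Cref{lem:localizing-ideals}, a localizing tensor-ideal of $\scT\ltimes\scK$ has the form $\scS\times\scM$, with $\scS$ a localizing tensor-ideal, $\scM$ a localizing submodule, and $\scS\ast\scK\subseteq\scM$. Since the triangulated structure on $\scT\ltimes\scK$ is the product one, the Verdier quotient splits as
\[
(\scT\ltimes\scK)/(\scS\times\scM)\simeq \scT/\scS\times\scK/\scM .
\]
Under this identification the quotient functor is the product of the two quotient functors. A right adjoint of a product functor is the product of right adjoints, and it preserves coproducts if and only if each factor does, because coproducts in a product category are computed componentwise. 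Hence $\scS\times\scM$ is smashing if and only if $\scS$ is smashing in $\scT$ and $\scM$ is smashing in $\scK$. Here I read ``smashing submodule'' as a localizing submodule whose quotient functor has a coproduct-preserving right adjoint.

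\emph{Left idempotents.} I would start from an idempotent triangle $E\to(1,0)\to F\to\Sigma E$ in $\scT\ltimes\scK$ and write $E=(e,A)$.
\begin{enumerate}[\rm(1)]
\item A morphism $(e,A)\to(1,0)$ is necessarily of the form $(\gamma,0)$. Triangles are componentwise, so $F\cong(f,\Sigma A)$, where $e\xr{\gamma}1\to f$ is a triangle in $\scT$.
\item Expanding $E\ot F=0$ (equivalently $E\ot E\cong E$ via the structure map) with the explicit formula for $\ot$ on objects and morphisms gives two kinds of conditions.
\item The first component yields $e\ot f=0$, so $e$ is a left idempotent of $\scT$ by \Cref{rec:smashing-ideals}.
\item The $\scK$-component involves $f\ast A$ and $e\ast\Sigma A$. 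Applying $-\ast A$ to $e\to1\to f$ gives the triangle $e\ast A\xr{\gamma\ast A}A\to f\ast A$. Therefore $f\ast A=0$ if and only if $\gamma\ast A$ is an isomorphism.
\end{enumerate}

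I expect the main obstacle to be step (4): bookkeeping which summands of $\scK$-components must vanish. In particular, the summand $e\ast\Sigma A$ appearing in $E\ot F$ must be reconciled with the claimed condition. I would first check carefully which formulation of ``left idempotent'' the statement intends, namely $E\ot F=0$ versus the structure map $E\ot E\to E$ being an isomorphism. I would then compute $(\gamma,0)\ot\mathrm{id}_{(e,A)}$ explicitly via the formula $(f_1\ot f_2,\,f_1\ast g_2\oplus f_2\ast g_1)$, to confirm that the surviving condition is exactly that $\gamma\ast A$ is an isomorphism. If an extra summand forces $A=0$, I would record that and adjust. Conversely, given $e$ and $A$ with $\gamma\ast A$ invertible, the same computation run backwards should show that $(e,A)$ is a left idempotent.

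\emph{Ideals generated by left idempotents.} Given a left idempotent $(e,A)$, the localizing tensor-ideal it generates is the join of those generated by $(e,0)$ and $(0,A)$. By \Cref{lem:localizing-ideals}, the former is $\scS\times\loc^\ast(\scS\ast\scK)$ with $\scS=\loc^\ot(e)$. Since $A\cong e\ast A\in\scS\ast\scK$, the summand $(0,A)$ adds nothing. This gives the ideal $\scS\times(\scS\ast\scK)$, where I would verify, using $\scS=\im(e\ot-)$, that $\scS\ast\scK=\im(e\ast-)$ is already a localizing submodule. The assignment $\scS\mapsto\scS\times(\scS\ast\scK)$ is injective by projecting to the first factor. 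It is surjective onto ideals generated by left idempotents by the description of left idempotents above together with \Cref{rec:smashing-ideals}, which yields the bijection.
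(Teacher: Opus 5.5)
\textbf{Parts one and three.} Your treatment of the first and third assertions is sound and follows the paper's argument. The paper simply asserts that smashing subcategories of $\scT\times\scK$ are products. For the third part it checks $(X,B)\cong(e,0)\ot(X,B)$ directly instead of passing through $\loc^\ast$ and a join.

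\textbf{The gap is step (4).} You leave it open, and the converse you plan there (``run the computation backwards'') is false. Your own expansion already settles it. Since $E\ot F=(e\ot f,\ e\ast\Sigma A\oplus f\ast A)$, the condition $E\ot F=0$ forces two things: $f\ast A=0$, i.e.\ $\gamma\ast A$ is invertible, and $e\ast\Sigma A=0$, i.e.\ $e\ast A=0$. Hence $A\cong e\ast A=0$.

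The two formulations of ``left idempotent'' you wanted to compare agree, because tensoring $E\to(1,0)\to F$ with $E$ gives a triangle $E\ot E\to E\to E\ot F$. Concretely, $(\gamma,0)\ot\mathrm{id}_{(e,A)}$ has $\scK$-component $(\gamma\ast A\ \ 0)\colon e\ast A\oplus e\ast A\to A$. A map out of a direct sum that vanishes on one summand is invertible only if that summand, here $e\ast A$, is zero.

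So the left idempotents of $\scT\ltimes\scK$ are exactly the objects $(e,0)$ with $e$ a left idempotent of $\scT$. A pair $(e,A)$ with $A\neq 0$ is never one, even when $\gamma\ast A$ is invertible. For example, $(1,A)\ot\mathrm{cone}\bigl((1,A)\to(1,0)\bigr)=(1,A)\ot(0,\Sigma A)=(0,\Sigma A)\neq 0$.

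\textbf{Consequence for the statement and the paper.} The second assertion holds only as a necessary condition, and that direction is all the paper's proof establishes. The paper writes down the same isomorphism $(\gamma\ot e,(\gamma\ast A\ \ 0))$ and extracts only that $\gamma\ot e$ and $\gamma\ast A$ are invertible. It overlooks that the zero entry also forces $e\ast A=0$.

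Instead of trying to prove the stated converse, you should record the sharper conclusion $A=0$ and prove the converse only for $(e,0)$, where it is immediate. The cone of $(\gamma,0)\colon(e,0)\to(1,0)$ is $(f,0)$, and $(e,0)\ot(f,0)=(e\ot f,0)=0$.

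The third assertion is unaffected. The ideal generated by a left idempotent is $\loc^\ot((e,0))=\scS\times(\scS\ast\scK)$ with $\scS=\loc^\ot(e)$, exactly as you and the paper compute. The bijection only needs two facts: every left idempotent has the form $(e,0)$, and each $(e_\scS,0)$ is a left idempotent.
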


\begin{proof}
The assertion about the form of smashing tensor-ideals of $\scT\ltimes \scK$ is a consequence of~\Cref{lem:localizing-ideals}, since the smashing subcategories of $\scT\times \scK$ are obtained as products of smashing subcategories of $\scT$ and $\scK$. Let $(e,A)\xr{(\gamma,0)} (1,0)$ be a left idempotent in $\scT\ltimes \scK$. Tensoring $(\gamma,0)$ with $(e,A)$ yields the isomorphism $(e\ot e,e\ast A \oplus e\ast A)\xr{(\gamma\ot e,(\gamma \ast A \, 0))} (e,A)$. It follows that $\gamma \ot e\colon e\ot e\to e$ and $\gamma \ast A\colon e\ast A\to A$ are isomorphisms. So, $\gamma \colon e\to 1$ is a left idempotent in $\scT$ and $A\in \im(e\ast -)=\loc^\ot(e)\ast \scK$. Hence, $\loc^\ot((e,A))\subseteq \loc^\ot(e)\times (\loc^\ot(e)\ast \scK)$. We claim that $\loc^\ot((e,0))=\loc^\ot(e)\times (\loc^\ot(e)\ast \scK)$. The inclusion $\loc^\ot((e,0))\subseteq \loc^\ot(e)\times (\loc^\ot(e)\ast \scK)$ is clear. Let $(X,B)\in \loc^\ot(e)\times (\loc^\ot(e)\ast \scK)$. Then $e\ot X\cong X$ and $e\ast B\cong B$. We have $(X,B)\cong (e\ot X,e\ast B)=(e,0)\ot (X,B)\in \loc^\ot((e,0))$, proving the claim. The ideal $\loc^\ot((e,A))$ must be of the form $L\times N$ and then $(e,A)\in L\times N$ implies $e\in L$. Consequently, $(e,0)\in L\times N$. We conclude that $\loc^\ot((e,A))=\loc^\ot((e,0))=\loc^\ot(e)\times (\loc^\ot(e)\ast \scK)$. All smashing tensor-ideals of $\scT$ are of the form $\loc^\ot(e)$, for some left idempotent $e$; see~\Cref{rec:smashing-ideals}. This completes the proof.
\end{proof}

\begin{rem}
Let $\scS$ be a smashing tensor-ideal of $\scT$ and let $\scM$ a smashing submodule of $\scK$ such that $\scS\ast \scK\subsetneq \scM$ (assuming that $\scK\neq 0$). By~\Cref{prop:smashing-ideals-trivial-extension}, it follows that $\scS\times \scM$ is a smashing tensor-ideal of $\scT\ltimes \scK$ that is not generated by a left idempotent. This already happens in the simplest case $\scS=0$ and $\scM=\scK$.
\end{rem}

By~\cite{BalmerKrauseStevenson20}, the lattice $\rmS^\ot(\scT)$ of smashing tensor-ideals of a rigidly-compactly generated tensor-triangulated category $\scT$ is a frame. We show that this can fail without the rigidity assumption. Let $\rmS^\ast(\scK)$ denote the lattice of smashing submodules of $\scK$.

\begin{rem}\label[rem]{rem:non-distr-smashing}
As in~\Cref{prop:non-distributive}, mapping a smashing submodule $\scM$ of $\scK$ to the smashing tensor-ideal $0\times \scM$ of $\scT\ltimes \scK$ provides a lattice embedding of $\rmS^\ast(\scK)$ into $\rmS^\ot(\scT\ltimes \scK)$. Therefore, if $\rmS^\ast(\scK)$ is non-distributive, then $\rmS^\ot(\scT\ltimes \scK)$ is also non-distributive.
\end{rem}

\begin{ex}\label[ex]{ex:non-distr-smashing}
Let $k$ be a field and consider the action of $\rmD(k)$ on $\rmD(\mathbb{P}^1_k)$ induced by the canonical morphism $\mathbb{P}^1_k\to \Spec(k)$. Since $\rmD(k)$ is generated by its tensor unit, every localizing subcategory of $\rmD(\mathbb{P}^1_k)$ is a submodule. The lattice of smashing subcategories of $\rmD(\mathbb{P}^1_k)$ is non-distributive; see~\cite[Remark 5.10]{BalmerKrauseStevenson20}. By~\Cref{rem:non-distr-smashing}, the lattice of smashing tensor-ideals of $\rmD(k)\ltimes \rmD(\mathbb{P}^1_k)$ is non-distributive.
\end{ex}


\begin{thebibliography}{99}
\bibitem{BalandChirvasituStevenson19}
S. Baland, A. Chirv\u asitu and G. Stevenson, The prime spectra of relative stable module categories, Trans. Amer. Math. Soc. {\bf 371} (2019), no.~1, 489--503.

\bibitem{Balmer05}
P. Balmer, The spectrum of prime ideals in tensor triangulated categories, J. Reine Angew. Math. {\bf 588} (2005), 149--168.

\bibitem{Balmer20a}
P. Balmer, Homological support of big objects in tensor-triangulated categories, J. \'Ec. polytech. Math. {\bf 7} (2020), 1069--1088.

\bibitem{Balmer20b}
P. Balmer, Nilpotence theorems via homological residue fields, Tunis. J. Math. {\bf 2} (2020), no.~2, 359--378.

\bibitem{BalmerFavi11}
P. Balmer and G. Favi, Generalized tensor idempotents and the telescope conjecture, Proc. Lond. Math. Soc. (3) {\bf 102} (2011), no.~6, 1161--1185.

\bibitem{BalmerKrauseStevenson20}
P. Balmer, H. Krause and G. Stevenson, The frame of smashing tensor-ideals, Math. Proc. Cambridge Philos. Soc. {\bf 168} (2020), no.~2, 323--343.

\bibitem{BensonCarlsonRickard97}
D.~J. Benson, J.~F. Carlson and J. Rickard, Thick subcategories of the stable module category, Fund. Math. {\bf 153} (1997), no.~1, 59--80.

\bibitem{BensonIyengarKrause13}
D.~J. Benson, S.~B. Iyengar and H. Krause, Module categories for group algebras over commutative rings, J. K-Theory {\bf 11} (2013), no.~2, 297--329,

\bibitem{Broue09}
M. Brou\'e, Higman's criterion revisited, Michigan Math. J. {\bf 58} (2009), no.~1, 125--179.

\bibitem{BuanKrauseSolberg07}
A.~B. Buan, H. Krause and \O. Solberg, Support varieties: an ideal approach, Homology Homotopy Appl. {\bf 9} (2007), no.~1, 45--74.

\bibitem{DevinatzHopkinsSmith88}
E.~S. Devinatz, M.~J. Hopkins and J.~H. Smith, Nilpotence and stable homotopy theory. I, Ann. of Math. (2) {\bf 128} (1988), no.~2, 207--241.

\bibitem{GratzStevenson23}
S. Gratz and G. Stevenson, Approximating triangulated categories by spaces, Adv. Math. {\bf 425} (2023), Paper No. 109073, 44 pp.

\bibitem{GratzStevenson26}
S. Gratz and G. Stevenson, Ore's theorem for thick subcategories, Preprint arXiv:2604.19123 (2026).

\bibitem{Hopkins87}
M.~J. Hopkins, Global methods in homotopy theory, in {\it Homotopy theory (Durham, 1985)}, 73--96, London Math. Soc. Lecture Note Ser., 117, Cambridge Univ. Press, Cambridge.

\bibitem{Jakel23}
C. J\"akel, A computation of the ninth Dedekind number, J. Comput. Algebra {\bf 6/7} (2023), Paper No. 100006, 8 pp.

\bibitem{JiangStevenson26}
A. Jiang and G. Stevenson, Distributivity, affineness, and the structure sheaf, Preprint arXiv:2604.18793 (2026).

\bibitem{Krause26}
H. Krause, Tensor triangular geometry -- Notes for an Oberwolfach Seminar, Preprint arXiv:2605.19983 (2026).

\bibitem{MatsuiTakahashi17}
H. Matsui and R. Takahashi, Thick tensor ideals of right bounded derived categories, Algebra Number Theory {\bf 11} (2017), no.~7, 1677--1738.

\bibitem{Neeman92}
A. Neeman, The chromatic tower for $D(R)$, Topology {\bf 31} (1992), no.~3, 519--532.

\bibitem{Sabatini25}
E. Sabatini, The Balmer spectrum and tensor telescope conjecture for noetherian path algebras, Preprint arXiv:2511.20204 (2025).

\bibitem{Stevenson13}
G. Stevenson, Support theory via actions of tensor triangulated categories, J. Reine Angew. Math. {\bf 681} (2013), 219--254.

\bibitem{Stevenson26}
G. Stevenson, Some notes on tensor triangular geometry, Preprint arXiv:2602.08480 (2026).

\bibitem{Thomason97}
R.~W. Thomason, The classification of triangulated subcategories, Compositio Math. {\bf 105} (1997), no.~1, 1--27.

\bibitem{VHCGKRMP24}
L. Van Hirtum, P. De Causmaecker, J. Goemaere, T. Kenter, H. Riebler, M. Lass and C. Plessl, A Computation of the Ninth Dedekind Number Using FPGA Supercomputing, ACM Trans. Reconfigurable Technol. Syst. 17 (2024), no.~3, Article 40, 1--28.

\bibitem{Verasdanis26}
C. Verasdanis, Costratification and actions of tensor-triangulated categories, Doc. Math. (2026), published online first, \url{https://doi.org/10.4171/dm/1069}.

\bibitem{Xu14}
F. Xu, Spectra of tensor triangulated categories over category algebras, Arch. Math. (Basel) {\bf 103} (2014), no.~3, 235--253.

\end{thebibliography}
\end{document}